\def \Z {\mathbb Z}
\def \E {\mathbb E}
\def \R {\mathbb R}
\def \C {\mathbb C}
\def\P{\mathbb P}
\def\ep{\varepsilon}
\def\Om{\Omega}
\def\Ph1{P^{(h_1)}}
\def\Ph2{P^{(h_2)}}
\def\b0{{\bf 0}}
\newtheorem{thm}{Theorem}[section]
\newtheorem{lem}[thm]{Lemma}
\newtheorem{cor}[thm]{Corollary}
\newtheorem{obs}[thm]{Observation}
\theoremstyle{plain}
\newtheorem{claim}[thm]{Claim}
\newtheorem{rem}[thm]{Remark}
\theoremstyle{definition}
\begin{document}

\title{An OSSS-type inequality for uniformly drawn subsets of fixed size}

\author{Jacob van den Berg\footnotemark[2], Henk Don\footnotemark[3] 
\small \\
  {\small \footnotemark[2] CWI, Amsterdam ; email: J.van.den.Berg@cwi.nl} \\
  {\small \footnotemark[3] Radboud Universiteit, Nijmegen; email: henk.don@ru.nl}
  }
\date{}
\maketitle

\begin{abstract}
 The OSSS inequality (O'Donnell, Saks, Schramm and Servedio \cite{OSSS05}) gives an upper bound for the variance of a function $f$ of
independent $0$-$1$ valued random variables in terms of the influences of these random variables and the computational complexity of a (randomised) 
algorithm for determining the value of $f$.

Duminil-Copin, Raoufi and Tassion \cite{DRT19} obtained a generalization to monotonic measures 
and used it to prove 
new results for Potts models and random-cluster models.
Their generalization of the OSSS inequality raises the question if there are still other measures for which
a version of that inequality holds.

We derive a version of the OSSS inequality for a family of measures that are far from monotonic,
namely the $k$-out-of-$n$ measures (these measures correspond with drawing $k$ elements from a set of size $n$ uniformly). 
We illustrate the inequality by studying the
event that there is an occupied horizontal crossing of an $R \times R$ box on the triangular lattice
in the site percolation model where exactly half of the
vertices in the box are occupied.

\end{abstract}
{\it Key words and phrases:}  OSSS inequality, percolation, randomized algorithm. 

\begin{section}{Introduction}\label{Intr}

\begin{subsection}{Background and main results of the paper}\label{backgr}
The OSSS inequality \cite{OSSS05} gives an upper bound for the variance of a function $f$ of independent $0$-$1$ valued
random variables in terms of their influences on $f$ and their revealment probabilities with respect to an algorithm to determine the value of $f$.
Since the break-through work by Duminil-Copin, Raoufi and Tassion around 2017 it has become one of the main tools to prove sharp phase transition in
a number of important models from statistical mechanics. In particular, in \cite{DRT19a} they applied the OSSS inequality to Voronoi percolation on $\R^d$,
and in \cite{DRT19} they extended the inequality to  monotonic measures (measures satisfying the FKG lattice condition) and used that extension to prove sharp phase
transition for random-cluster and Potts models. 
That version of OSSS was further extended, but still under the condition that the measure is monotonic, by Hutchcroft \cite{Hu20} (who developed a two-function form
and used it to obtain new critical exponent inequalities) and by Dereudre and Houdebert \cite{DH21}
(who gave a generalization in a continuum setting and used it to prove sharp phase
transition for the Widom-Rowlinson model).

The above raises the natural question whether the OSSS inequality can be extended beyond the class of monotonic measures.
Our main result, Theorem \ref{Maint} below, is a version of the OSSS inequality for a family of measures which are clearly non-monotonic, namely $k$-out-of-$n$ measures.

\smallskip
Before stating the theorem, we will introduce some definitions and notation.

If $s \geq 1$ is an integer, $[s]$ denotes the set $\{1, \cdots, s\}$. If $t \geq s$ is also an integer, $[s,t]$ denotes $\{s, \cdots, t\}$.

Let $E$ be a finite set and let $k \leq |E|$, where $|E|$ denotes the size (i.e. cardinality) of $|E|$. If $\omega \in \{0,1\}^E$, we will use
the notation $|\omega|$ for $\sum_{e \in E} \omega_e$. Informally, the $k$-out-of-$E$ distribution (notation $P_{k,E}$) is the uniform distribution 
on the set of all subsets of $E$ of size $k$. Formally, we define
$P_{k,E}$ as the following distribution on $\{0,1\}^E$: For each $\omega \in \{0,1\}^E$,

\begin{equation}\label{PkE-def}
P_{k,E}(\omega) =
\begin{cases}
0, & \text{ if }|\omega| \neq k \\
\frac{1}{\binom{|E|}{k}}, & \text{ if }|\omega| = k,
\end{cases}
\end{equation}
Often, when the
set $E$ is clear from the context, or only its size $|E|$ matters, we will simply write $P_{k,n}$ instead of $P_{k,E}$, with $n= |E|$
(and call it a $k$-out-of-$n$ distribution).

Let $\omega \in \{0,1\}^E$ and and let $A$ be an event, i.e. a subset of $\{0,1\}^E$.
For convenience we will assume that $A \neq \emptyset$ and $A \neq  \{0,1\}^E$.
An element $e \in E$ is said to be {\it pivotal} (w.r.t. $\omega$
and $A$) if exactly one of $\omega$ and $\omega^{(e)}$ is in $A$. Here $\omega^{(e)}$ denotes the element of $\{0,1\}^E$ obtained
from $\omega$ by replacing $\omega_e$ by $1-\omega_e$.
We say that $e$ is $0$-pivotal if $e$ is pivotal and $\omega_e = 0$.
Further, a {\it pair} of points $e, f \in E$ with $e \neq f$ is called pivotal
(w.r.t. $\omega$
and $A$) if exactly one of $\omega$ and $\omega^{(e,f)}$ is in $A$. Here $\omega^{(e,f)}$ denotes the element of $\{0,1\}^E$ obtained
from $\omega$ by exchanging the values of $\omega_e$ and $\omega_f$. 

The event $A$ is called {\it increasing} if, for all pairs $\omega, \sigma$ with $\omega \in A$ and
$\sigma \in \{0,1\}^E$, $\omega \leq \sigma$ implies that $\sigma \in A$. Here, as usual, $\omega \leq \sigma$ means that
$\omega_e \leq \sigma_e$ for all $e \in E$.
In this paper we will mainly deal with increasing events, and it will turn out that in particular the above mentioned notion of being
$0$-pivotal comes up naturally. We will use the notation $I_{k, E}^A(e)$ for the probability (under the measure $P_{k,E}$) that a given point $e$ is $0$-pivotal
w.r.t. the event $A$.
Often, when $k$, $E$ and $A$ are clear from the context, we simply write $I(e)$.


Somewhat informally, a {\it decision tree} is (in the present context) an algorithm to check whether an `input string' $\omega$ is in $A$, by `examining' 
one by one the elements
of $E$ (i.e. querying their $\omega$-value $\omega_e$), where at each step the next element of $E$ to be examined depends on the already examined elements of $E$
and their $\omega$-values. Note that if, after some step, the up to then revealed $\omega$-values already determine whether the
input string is in $A$ or not, the algorithm may stop. That step wil be denoted by $\tau$. 

 We will use the {\it formal} definition of a decision tree from \cite{DRT19}: A decision tree is a pair 
$T = (e_1, \phi)$, with $e_1 \in E$ and $\phi$ of the form $(\phi_2, \cdots, \phi_n)$, where each $\phi_t$ is a function which
assigns, to each pair $\left( (e_1, \cdots, e_{t-1}), \,\, \omega_{(e_1, \cdots, e_{t-1})} \right)$ an element $e_t$ of $E \setminus \{e_1, \cdots, e_{t-1}\}$.

The corresponding algorithm is then as follows: First $e_1$ is examined, i.e. its value $\omega_{e_1}$ is queried (and revealed). Depending
on that value the next element of $E$, namely the element $e_2 := \phi_2(e_1, \omega_{e_1})$ is selected and examined. After the value $\omega_{e_2}$
of $e_2$ has been revealed, the next element, namely $e_3 := \phi_3((e_1, e_2), (\omega_{e_1}, \omega_{e_2})$ is examined etcetera.
The time $\tau$ mentioned before is then formally defined as 
\begin{equation}\label{def-tau}
\tau(\omega) := \min\{t \geq 1 \, : \, \text{ for all } \omega' \in \{0,1\}^E \text{ with }
\omega'_{e_{[t]}} = \omega_{e_{[t]}}, \,\, I_A(\omega') = I_A(\omega)\},
\end{equation}
where the notation $e_{[t]}$ is used for $\{e_1, \cdots, e_t\}$.
Note that $\tau$ depends on $A$ and $T$, although this is not visible in the notation.

Often it is assumed, and this is also the case in this paper, that the string $\omega$ is generated according to a probability distribution $\P$.
One could say that a given decision tree is
`efficient' (w.r.t. the above mentioned $\P$) if `typically', or `on average', the number $\tau$ is `small'.
In this paper we focus on the case where $\P = P_{k,E}$. With this $\P$ it makes sense to replace, in the definition \eqref{def-tau} of $\tau$,
the part (for all $\omega'  \in \{0,1\}^E $) by (for all $\omega'  \in \{0,1\}^E$ with $|\omega'| = k$). Although the corresponding
(slightly stronger) analog of our Theorem \ref{Maint} would still be true (with practically the same proof) we keep the more `standard' definition \eqref{def-tau}.

The probability that the value at a given `site' $e \in E$ is revealed (for the decision tree $T$) will be denoted
by $\delta_e = \delta_e(A, T)$. So, formally,
$$\delta_e(A, T) = P_{k,E}(e_t = e \text{ for some } t \leq \tau(\omega)).$$
 
Our main result, Theorem \ref{Maint} below, is a version of the OSSS inequality for $k$-out-of-$n$ measures.
\begin{thm}\label{Maint} 
(a) There is a constant $C$ such that, for every set $E$ with even cardinality, every increasing event $A \subset \{0,1\}^E$,
and every decision tree $T$,

\begin{eqnarray}\label{eq-maint-a}
&\, & \,\,\,\, \, \, \, \,\,P_{\frac{|E|}{2},E}(A) (1 - P_{\frac{|E|}{2},E}(A)) \\ \nonumber
&\, & \leq \,  C\, \left( \sum_{e \in E} I(e) \, \delta_e + \sum_{e \in E} I(e) \bar{\delta}\right ),
\end{eqnarray}
where $ \bar{\delta} = \bar{\delta}(A,T) = \frac{1}{|E|} \sum_{e \in E} \delta_e(A, T)$, the `average revealment per vertex'.

(b) More generally, for every $\ep \in (0, 1/2]$ there is a constant $C(\ep)$ such that for all positive integers $n$, 
all $E$ with $|E| = n$, all integers 
$k \in [\ep n, (1-\ep)n]$, all increasing events $A \subset \{0,1\}^E$ and all decision trees $T$,

\begin{eqnarray}\label{eq-maint-b}
&\, & \,\,\,\, \, \, \, \,\,P_{k,E}(A) (1 - P_{k,E}(A)) \\ \nonumber
&\, & \leq \,  C(\ep) \, \left( \sum_{e \in E} I(e) \, \delta_e + \sum_{e \in E} I(e) \bar{\delta}\right ).
\end{eqnarray}

{\small Although we haven't pursued this extensively, more quantitative versions can be obtained from our proof. For instance, if $|E| \geq 10$,
then \eqref{eq-maint-a} holds for $C = 20$.} 
\end{thm}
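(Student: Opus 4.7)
My approach would follow the martingale-increment strategy underlying the original OSSS proof, adapted to the $k$-out-of-$n$ measure. With $f=\mathbf{1}_A$, let $\mathcal{F}_t$ be the $\sigma$-field generated by the first $t$ values revealed by $T$, and set $g_t=\mathbb{E}_{P_{k,E}}[f\mid\mathcal{F}_t]$. Since $g_\tau=f$, orthogonality of martingale differences gives
$$P_{k,E}(A)\bigl(1-P_{k,E}(A)\bigr)=\mathrm{Var}(f)=\sum_{t\geq 1}\mathbb{E}\!\left[(g_t-g_{t-1})^2\,\mathbf{1}_{t\leq\tau}\right],$$
reducing the problem to bounding each martingale increment.

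\textbf{Splitting the increments.} Given $\mathcal{F}_{t-1}$, the unrevealed coordinates follow a smaller $k'$-out-of-$n'$ law that is \emph{not} independent of $\omega_{e_t}$; this is the essential obstacle to applying the classical proof verbatim. Writing $g_t^{(0)},g_t^{(1)}$ for the two possible post-reveal values of $g_t$, I would split $g_t^{(1)}-g_t^{(0)}$ into a \emph{local} piece, which isolates the direct effect of flipping $\omega_{e_t}$ while keeping the conditional law of the remaining bits fixed, and a \emph{global} piece, which captures the shift in the conditional law of those remaining bits produced by the change in the remaining count of ones. Because $A$ is increasing, the local piece equals (up to a conditional prefactor) a $0$-pivot probability at $e_t$; after incorporating the factor $\mathbf{1}_{t\leq\tau}$ and summing in $t$, it should yield the first contribution $C\sum_e I(e)\delta_e$.

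\textbf{The global correction via swap invariance.} For the global piece, the key observation is that $P_{k,E}$ is invariant under swaps $\omega\mapsto\omega^{(e,f)}$ and that $P_{k'-1,E'\setminus\{e_t\}}$ can be obtained from $P_{k',E'\setminus\{e_t\}}$ by selecting a uniformly random $1$ and flipping it to $0$. I would use this to rewrite the global piece as an average, over potential swap partners $f$, of swap-pivotality events for the pair $(e_t,f)$. Because the averaging runs over \emph{all} sites $f$, the site-wise revealment $\delta_f$ is replaced by its uniform average $\bar\delta$, producing the second contribution $C\sum_e I(e)\bar\delta$. Part (b) would come from the same argument together with ratio bounds between $P_{k,E}$ and $P_{k\pm 1,E}$ (equivalently, a local CLT estimate), which are uniformly controlled precisely when $k\in[\varepsilon n,(1-\varepsilon)n]$ and produce the dependence $C(\varepsilon)$.

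\textbf{Main obstacle.} The hardest step should be carrying out the swap-based rewriting of the global piece cleanly enough that the corrections accumulated across the steps of $T$ telescope into the concise expression $\bar\delta\sum_e I(e)$, rather than a harder-to-control mixed sum of the form $\tfrac{1}{|E|}\sum_{e,f}(\text{swap-pivot term})\cdot\delta_f$. Ensuring that only $0$-pivots (and not general pivots) appear on the right-hand side, which is what makes the inequality useful in the percolation application, will require carefully distinguishing the cases $\omega_{e_t}=0$ and $\omega_{e_t}=1$ throughout and systematically exploiting the monotonicity of $A$.
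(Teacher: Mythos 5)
Your starting point --- the martingale decomposition of $\mathrm{Var}(\mathbf{1}_A)$ along the filtration of revealed coordinates --- differs from what the paper does, and the step you yourself flag as the ``main obstacle'' is a genuine gap, not a deferred detail. The paper does not use martingale increments at all. It draws an independent copy $\mathbf{Y}$ of $\mathbf{X}\sim P_{k,E}$ together with a uniform random matching $\sigma$ of the disagreement points of $(\mathbf{X},\mathbf{Y})$, and interpolates from $\mathbf{X}$ to $\mathbf{Y}$: at step $t$ it replaces the values at both $\mathbf{e}_t$ and $\sigma(\mathbf{e}_t)$ by their $\mathbf{Y}$-values, so each step is a swap keeping the string in $\Omega_{k,n}$. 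The decisive input (Lemma~\ref{lem-key}) is that, conditional on $\mathbf{X}_{\mathbf{e}_{[t]}}$ with $t\leq\tau$ and on the set of matching pairs, the interpolant $\mathbf{Z}^{(t)}$ has the same law as $\mathbf{Y}$; this turns ``exactly one of $\mathbf{Z}^{(t-1)},\mathbf{Z}^{(t)}$ is in $A$'' into a pivotal-\emph{pair} event for $(\mathbf{e}_t,\sigma(\mathbf{e}_t))$ evaluated in $\mathbf{Y}$, and monotonicity of $A$ then bounds each pair-pivot probability by $I(\mathbf{e}_t)+I(\sigma(\mathbf{e}_t))$.

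What your plan lacks is the device that makes the quantitative accounting come out right: the split on whether the Hamming distance $d(\mathbf{X},\mathbf{Y})$ is below or above $c_1 n$. On the large-distance event, the conditional probability that the random matching pairs a given $e$ with a given $f$ is at most $2/(c_1 n)$; it is precisely this $O(1/n)$ factor that, after summing $I(e)+I(f)$ over $f\in E$, delivers the two terms $\sum_e I(e)\delta_e$ and $\bar\delta\sum_e I(e)$. On the small-distance event the contribution is absorbed back into the left-hand side by concentration of $d(\mathbf{X},\mathbf{Y})$. Your ``swap-based rewriting of the global piece'' does not supply any substitute for this $O(1/n)$ bound. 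Generic swap manipulations, such as the $F^\mu$ encoding of \cite{DRT19}, give a factor of order $1/(\text{number of remaining }0\text{'s})$, which is as large as $1/(n-t)$ and sums to $\log n$ over $t$; Section~\ref{sect-rem-proof} of the paper shows via Lemma~\ref{exch-rule1} that this $\log n$ loss really does occur on a simple example, and your plan gives no reason why your averaging over swap partners $f$ would instead produce $O(1/n)$ uniformly in $t$. A secondary but real problem is that your ``local/global'' split of the increment is not well defined for $P_{k,E}$: flipping $\omega_{\mathbf{e}_t}$ necessarily changes the conditional count of ones and hence the conditional law of all remaining coordinates, so there is no resampling sub-operation that ``keeps the conditional law of the remaining bits fixed.''
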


%

%


\begin{rem}\label{rem-maint}

As mentioned before, the OSSS inequality in \cite{DRT19} gives an upper bound for the variance of a function.
In many applications in that and other papers, the corresponding function is the indicator function of an event. In our theorem we restrict to such
functions.

\end{rem}

In Section \ref{sect-appl}
we illustrate Theorem \ref{Maint} by studying the
event that there is a horizontal crossing of an $R \times R$ box on the triangular lattice
in the site percolation model where exactly $k$ of the $n := R^2$
vertices are occupied. We show there
that the expected number of pivotal sites (and, consequently, the `discrete derivative' of the crossing probability
with respect to the fraction of occupied sites) at the value $k = R^2/2$ is larger than some positive
power of $R$, see Theorem \ref{thm-cross-piv}. The proof uses, besides Theorem \ref{Maint}, only a minimum of preliminaries from Bernoulli percolation (i.e. the
usual percolation model where the states of the vertices are independent of each other). It can be proved without using Theorem \ref{Maint}, but we don't
know a proof which neither uses Theorem \ref{Maint}, nor quite heavy results from Bernoulli percolation; see the comments and discussion in
Section \ref{sec:final}.

\end{subsection}
\begin{subsection}{Other related work} \label{rel-work}

The OSSS inequality, seen from right to left, gives a {\it lower} bound for the expectation of the number of pivotals,
and that is how it has been successfully used
in percolation theory and related fields, for instance in \cite{DRT19} and other papers mentioned above. Other well-known inequalities
for product measures,
which do not involve decision trees but also provide a lower bound for that expectation,
are the KKL inequality \cite{KKL}, and a related inequality
by Talagrand \cite{Tal94}. 

The KKL inequality (or, rather, a consequence of it) and Talagrand's inequality say,
roughly speaking, that, for product measures, the expected number of pivotals for the event $A$ is at
least some constant times the probability of $A$,
times ($1$ minus the probability of $A$) times $\log(1 / M)$, where $M$ is the maximum over all $e$ of the probability that $e$ is
pivotal. In situations where no suitable decision tree exists (or is known) but where some upper bound on the quantity $M$ is known, the
OSSS inequality is often useless while the KKL and Talagrand's inequalities still give a useful result. On the other hand, in specific
situations, where some suitable decision tree does exist, OSSS can be substantially stronger than KKL and Talagrand's inequality.

While the proof of KKL (and of Talagrand's inequality) has an analytic/algebraic flavour
(Fourier expansion, hypercontractivity), the OSSS inequality is, essentially, proved
from suitable coupling arguments which are more `probabilistic' in nature. 

The KKL inequality was generalised to $k$-out-of-$n$ measures (and similar measures on state spaces with a larger `alphabet', e.g. $\{0,1,2\}^n$) 
by O'Donnell and Wimmer \cite{OWi13} (see also \cite{FOWu22}).

Finally, we remark here that the paper \cite{BJ12} extends yet another inequality from product measures to $k$-out-of-$n$ measures. However, that inequality
(and its proof) are very different in nature from OSSS.

\end{subsection}
\begin{subsection}{Organization of the paper} \label{orga}
In Section \ref{Prelim} we introduce a `construction', a suitable sequence of strings ${\bf Z}^{(1)}, \cdots {\bf Z}^{(n)}$ of $0$'s and $1$'s. This
construction is later, in Section \ref{subsProofMaint}, used in the proof of our main result, Theorem \ref{Maint}.
It is specifically designed for the case of $k$-out-of-$n$ distributions and differs 
substantially from that in \cite{DRT19}, which was based on an encoding in terms of independent, uniformly on the interval $[0,1]$ distributed
random variables. Section \ref{sect-rem-proof} explains why we needed a different construction.

In Section \ref{sect-appl} we use Theorem \ref{Maint} in the study of  box-crossing probabilities for a 
percolation model where a fixed number of vertices is occupied: see Theorem \ref{thm-cross-piv} in that section.
This is meant as an illustration of how Theorem \ref{Maint} can be used, not as a major application. No specific percolation knowledge will be assumed in that section:
the proof of Theorem \ref{thm-cross-piv} is almost self-contained, apart from a few mild preliminaries in Section \ref{sect-ingr-thm-cross}. 
Section \ref{sec:final} gives several remarks concerning, among other things, another (potential) way to prove Theorem \ref{thm-cross-piv}.
\end{subsection}

\end{section}

\begin{section}{Proof of the main result}\label{sect-Proof-main}
\begin{subsection}{A construction and preliminary steps used in the proof}\label{Prelim}
We first introduce some more definitions and notation.
Let, as before, $E$ be a set with $n$ elements, and let $0 < k < n$. 
We use the notation $\Om_{k,n}$ for the set of all $x \in \{0,1\}^n$ with exactly $k$ $1's$.
Let $x$ and $y$ be two strings $\in \Om_{k,n}$. We call a point $e \in E$ a {\it disagreement point} (for the pair $(x,y)$)
if $x_e \neq y_e$. Further, we call it a disagreement point of {\it type $(1,0)$} if $x_e = 1$ and $y_e = 0$, and of {\it type $(0,1)$} in the reverse case.
Observe that the number of type 1 disagreement points for $(x,y)$ is equal to the number of type 2 disagreement points. 
We will denote the number of disagreement points for $(x,y)$ by $d(x,y)$.

A {\it matching of disagreement points} (for the pair $(x,y)$) is a map $\sigma$ from the set of disagreement points to itself with the following
two properties: If $\sigma(e) = \sigma(f)$, then $\sigma(f) = \sigma(e)$. And, if $e$ is a disagreement point of type $(1,0)$, then $\sigma(e)$ is of type $(0,1)$.

Often, instead of `matching of disagreement points' we simply say `matching'. Observe that if $\sigma$ is a matching for $(x,y)$, and $e$ is a disagreement point
for $(x,y)$, then the string $x^{(e,\sigma(e))}$ is in $\Om_{k,E}$. \\
For convenience, we extend the map $\sigma$ to the entire set $E$ by defining $\sigma(f) = f$ for all points $f \in E$ with $x_f = y_f$
(these points are called {\it singletons}). 

In the proof of our main result we often will consider, for a given pair $(x,y)$, with $x, y \in \Om_{k,n}$, a random map, typically denoted by $\bf \sigma$,
uniformly drawn from the set of all matchings for the pair $(x,y)$. We call it a {\it uniform matching} (for the pair $(x,y)$).

\medskip
Now let $A$ be an event (i.e. $A \subset \{0,1\}^E$) and $T = (e_1, \phi)$ a decision tree.
Let $\bf X$ and $\bf Y$ be independent random strings, each drawn from the distribution $P_{k,n}$.
Further, let $\bf \sigma$ be a uniform matching for $(\bf X, \bf Y)$. (This means that, given ${\bf X} = x$ and ${\bf Y} = y$, $\bf \sigma$
is drawn uniformly from the set of all matchings for the pair $(x,y)$). 

Define inductively, for $t \geq 1$, (with $\phi$ and $e_1$ as in the definition of a decision tree $T$ in the paragraphs preceding Theorem \ref{Maint}),

\begin{equation} \label{def-e-induct-n}
{\bf e}_t = 
\begin{cases}
e_1 & \text{ if } t=1, \\
\phi_t({\bf e}_{[t-1]}, {\bf X}_{{\bf e}_{[t-1]}}) & \text{ if } t > 1,
\end{cases}
\end{equation}

and (equivalently to the stopping time in \eqref{def-tau}), \\
\begin{equation}\label{def-tau-2-n}
\tau := \min\{t \geq 1 \, : \, \forall x \in \{0,1\}^E \text{ with } x_{{\bf e}_{[t]}} = {\bf X}_{{\bf e}_{[t]}}, \,
I_A(x) = I_A({\bf X})\}.
\end{equation}

Now we are ready to define a sequence of strings ${\bf Z}^{(0)}, {\bf Z}^{(1)},\cdots, {\bf Z}^{(n)}$.
We take ${\bf Z}^{(0)} = {\bf X}$. Then, step by step, we make ${\bf Z}^{(i)}$ closer to ${\bf Y}$ in the following (somewhat informally described)
way. At step 1 we replace the ${\bf Z}^{(0)}$-value (i.e. the ${\bf X}$-value) at the point ${\bf e}_1$ ($= e_1$) by the ${\bf Y}$-value at that point.
To keep the string in $\Om_{k,n}$ we of course have to make another change if ${\bf X}_{e_1} \neq {\bf Y}_{e_1}$, and do that at the point ${\bf \sigma}(e_1)$.
Similarly, at step 2 we replace the value at the point ${\bf e}_2$ in the string ${\bf Z}^{(1)}$ by the ${\bf Y}$-value at that point, and do the same at 
${\bf \sigma}({\bf e}_2)$. (Note that if ${\bf e}_2 = {\bf \sigma}(e_1)$ or if ${\bf \sigma}({\bf e}_2) = {\bf e}_2$, nothing changes at that step).
We continue doing this up to time ${\bf \tau}$ (after which no changes are made anymore).

Here is the formal definition:
\begin{eqnarray}\label{Z-def}
&\, & {\bf Z}^{(0)} := {\bf X}. \\
&\, & \text{For } 0 < j \leq \tau, \, {\bf Z}^{(j)}_e := 
\begin{cases}
{\bf Y}_e, & \text{ if } e = {\bf e}_j \text{ or } \sigma({\bf e}_j), \\ \nonumber
{\bf Z}^{(j-1)}_e, & \text{ otherwise. }
\end{cases} \\ \nonumber
&\, &  \text{For } j > \tau, \, {\bf Z}^{(j)} := {\bf Z}^{(j-1)}. \nonumber
\end{eqnarray}

From the above definition one easily gets the following:

\begin{obs}\label{Z-def-obs}
For $j \leq \tau$, we have 
\begin{equation}
{\bf Z}_e^{(j)} = 
\begin{cases}
{\bf Y}_e & \text{ for all } e \in \{{\bf e}_1, \cdots, {\bf e}_j\} \cup \{\sigma({\bf e}_1), \cdots, \sigma({\bf e}_j)\}, \\ \nonumber
{\bf X}_e & \text{ for all other } e \in E. \nonumber
\end{cases}
\end{equation}
Further, for $j > \tau$ we have ${\bf Z}^{(j)} = {\bf Z}^{(\tau)}$.
\end{obs}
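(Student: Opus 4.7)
The observation is essentially a direct unrolling of the recursive definition \eqref{Z-def}, so the natural approach is a straightforward induction on $j$. For the base case $j=0$, we have ${\bf Z}^{(0)} = {\bf X}$ by definition, and the index set $\{{\bf e}_1,\ldots,{\bf e}_0\} \cup \{\sigma({\bf e}_1),\ldots,\sigma({\bf e}_0)\}$ is empty, so the claim reduces to ${\bf Z}^{(0)}_e = {\bf X}_e$ for every $e$, which holds vacuously.

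For the inductive step at $0 < j \leq \tau$, I would fix $e \in E$ and split into three cases. If $e \in \{{\bf e}_j, \sigma({\bf e}_j)\}$, the first branch of \eqref{Z-def} directly gives ${\bf Z}^{(j)}_e = {\bf Y}_e$. If $e$ lies in the old index set $\{{\bf e}_1,\ldots,{\bf e}_{j-1}\} \cup \{\sigma({\bf e}_1),\ldots,\sigma({\bf e}_{j-1})\}$ but $e \notin \{{\bf e}_j,\sigma({\bf e}_j)\}$, the second branch of \eqref{Z-def} gives ${\bf Z}^{(j)}_e = {\bf Z}^{(j-1)}_e$, which equals ${\bf Y}_e$ by the inductive hypothesis. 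Finally, if $e$ lies outside both sets, the same branch combined with the ``otherwise'' clause of the inductive hypothesis gives ${\bf Z}^{(j)}_e = {\bf Z}^{(j-1)}_e = {\bf X}_e$. The case $j > \tau$ then follows from the third line of \eqref{Z-def} by a trivial second induction: ${\bf Z}^{(j)} = {\bf Z}^{(j-1)} = \cdots = {\bf Z}^{(\tau)}$.

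I do not foresee a real obstacle, since the argument is a pure unwinding of the recursion; the only point requiring a moment's thought is the consistency of the update when the two index sets overlap. This can happen either when ${\bf e}_j$ is a singleton so that $\sigma({\bf e}_j) = {\bf e}_j$, or when $\sigma({\bf e}_j)$ coincides with some previously revealed ${\bf e}_i$ (equivalently, by the involution property of $\sigma$ on disagreement points, when $\sigma({\bf e}_i) = {\bf e}_j$). In the singleton case ${\bf X}_{{\bf e}_j} = {\bf Y}_{{\bf e}_j}$, so both branches of the claim agree at that point. In the second case the position was already set to ${\bf Y}_e$ at step $i$ by the inductive hypothesis, so reassigning it at step $j$ changes nothing. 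Hence no inconsistency arises, and the induction goes through cleanly.
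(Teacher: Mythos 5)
Your inductive argument is correct and is exactly the routine unrolling of the recursive definition \eqref{Z-def} that the paper implicitly relies on when it states the observation without proof ("From the above definition one easily gets the following"). Your careful treatment of the overlap cases — the singleton $\sigma({\bf e}_j)={\bf e}_j$ and the case $\sigma({\bf e}_j)={\bf e}_i$ for some $i<j$ — is the only place where anything could conceivably go wrong, and you verify correctly that the update remains consistent, so the proof is complete.
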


From the above Observation we see that, given ${\bf X}_{{\bf e}_{[t]}}$ (i.e. given ${\bf e}_{[t]}$ and the ${\bf X}$-values on the points in ${\bf e}_{[t]}$) with
$t \leq \tau$, ${\bf Z}^{(t)} \equiv {\bf Y}$ on the points of ${\bf e}_{[t]}$.
Hence, noting that the event $\{t \leq \tau\}$ is measurable w.r.t. ${\bf X}_{{\bf e}_{[t]}}$ (and recalling that ${\bf X}$ and ${\bf Y}$ are independent),
the conditional distribution of the ${\bf Z}^{(t)}$-values on ${\bf e}_{[t]}$, given the above mentioned information, is $P_{k,E}$ (restricted to ${\bf e}_{[t]}$).
Further, given ${\bf X}_{{\bf e}_{[t]}}$ (still with $t \leq \tau$) and ${\bf Z}^{(t)}_{{\bf e}_{[t]}}$, the only information we have on the ${\bf Z}^{(t)}$-values
outside ${\bf e}_{[t]}$ is the {\it number} of $0$'s and $1$'s, not their locations. In other words, the conditional distribution of
${\bf Z}^{(t)}_{E \setminus {\bf e}_{[t]}}$ is $P_{k - |{\bf Z}^{(t)}_{{\bf e}_{[t]}}|, \, E \setminus {\bf e}_{[t]}}$. 
Combining these statements we get 

\begin{lem}\label{Z-lem-1} 
(a) The conditional distribution of ${\bf Z}^{(t)}$, given ${\bf X}_{{\bf e}_{[t]}}$ with $t \leq \tau$, is $P_{k,E}$. \\
(b) ${\bf Z}^{(n)}$ is independent of ${\bf X}_{{\bf e}_{[\tau]}}$ and has distribution $P_{k,E}$. In particular, ${\bf Z}^{(n)}$ and 
$I_A({\bf X})$ are independent.
\end{lem}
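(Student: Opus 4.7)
The plan is to prove part (a) by factoring the conditional law of ${\bf Z}^{(t)}$ into its values on ${\bf e}_{[t]}$ and on its complement, and then to deduce (b) from (a) together with the measurability of $\tau$ with respect to the revealed queries.

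For part (a), I would fix a realization $\omega$ of the queries and their answers for which $t \leq \tau(\omega)$. The set $F := {\bf e}_{[t]}$ is then determined by $\omega$ and the decision tree. Writing $k_F$ for the number of $1$'s in $z_F$, the factorization to establish is
\[
\P\bigl({\bf Z}^{(t)} = z \mid {\bf X}_F = \omega\bigr) = \P\bigl({\bf Z}^{(t)}_F = z_F \mid {\bf X}_F = \omega\bigr)\, \P\bigl({\bf Z}^{(t)}_{E\setminus F} = z_{E\setminus F} \mid {\bf X}_F = \omega,\, {\bf Z}^{(t)}_F = z_F\bigr).
\]
The first factor is straightforward: by Observation \ref{Z-def-obs} we have ${\bf Z}^{(t)}_F = {\bf Y}_F$, and since ${\bf Y}$ is independent of ${\bf X}$ with distribution $P_{k,E}$, this factor equals the marginal $\binom{n-t}{k-k_F}/\binom{n}{k}$. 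The target for the second factor is therefore $1/\binom{n-t}{k-k_F}$, so that the two multiply to $1/\binom{n}{k} = P_{k,E}(z)$.

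The main obstacle is the second factor, which I would handle by a symmetry argument. Under the conditioning ${\bf X}_F = \omega$ and ${\bf Y}_F = z_F$, the conditional joint law of $({\bf X}_{E\setminus F}, {\bf Y}_{E\setminus F}, \sigma)$ is invariant under the natural action of any permutation $\pi$ of $E \setminus F$ extended as the identity on $F$. The non-trivial ingredient is the equivariance of the random matching $\sigma$: conjugation by $\pi$ bijects matchings of the pair $({\bf X}, {\bf Y})$ with matchings of the permuted pair, so uniformity of $\sigma$ is preserved. Since the characterization of ${\bf Z}^{(t)}$ given in Observation \ref{Z-def-obs} commutes with such a $\pi$, the induced distribution of ${\bf Z}^{(t)}_{E\setminus F}$ is permutation-invariant on $E\setminus F$. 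Since it almost surely has exactly $k - k_F$ ones, it must then be uniform on configurations with that count, yielding the desired factor.

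For part (b), the event $\{\tau = t\}$ is measurable with respect to ${\bf X}_{{\bf e}_{[t]}}$, so conditioning on ${\bf X}_{{\bf e}_{[\tau]}} = \omega_t$ for a realization with $\tau(\omega_t) = t$ is identical to conditioning on ${\bf X}_{{\bf e}_{[t]}} = \omega_t$. On this event ${\bf Z}^{(n)} = {\bf Z}^{(\tau)} = {\bf Z}^{(t)}$, and part (a) gives that the conditional law is $P_{k,E}$ regardless of $\omega_t$. Hence ${\bf Z}^{(n)}$ is independent of ${\bf X}_{{\bf e}_{[\tau]}}$ with marginal distribution $P_{k,E}$, and since $I_A({\bf X})$ is a function of ${\bf X}_{{\bf e}_{[\tau]}}$ by the definition of $\tau$, independence from $I_A({\bf X})$ follows at once.
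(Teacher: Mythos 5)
Your argument follows essentially the same route as the paper: the same two-factor decomposition of the conditional law of ${\bf Z}^{(t)}$ across ${\bf e}_{[t]}$ and its complement, the first factor handled by independence of ${\bf X}$ and ${\bf Y}$ together with Observation \ref{Z-def-obs}, and part (b) deduced from (a) in the identical way. The only substantive difference is that where the paper merely asserts that, given ${\bf X}_{{\bf e}_{[t]}}$ and ${\bf Z}^{(t)}_{{\bf e}_{[t]}}$, the conditional law of ${\bf Z}^{(t)}$ outside ${\bf e}_{[t]}$ is exchangeable, you give an explicit justification via equivariance of the uniform matching $\sigma$ under permutations of $E\setminus {\bf e}_{[t]}$, which is a sound and slightly more rigorous way of establishing that step.
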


\begin{proof} Part (a) follows from the paragraph preceding the lemma. The first statement in part (b) follows from part (a)
and the property (in the definition of the ${\bf Z}$-strings)
that ${\bf Z}^{(t)} = {\bf Z}^{(\tau)}$ for all $t \geq \tau$.
The second statement follows from the first because $I_A({\bf X})$ is measurable with respect to ${\bf X}_{{\bf e}_{[\tau]}}$.
\end{proof}

\end{subsection}

\begin{subsection}{Proof of Theorem \ref{Maint}}\label{subsProofMaint} 
Let $c_1$ be a postive number. (A more precise choice of $c_1$ will be made later).
By Lemma \ref{Z-lem-1} we have
\begin{eqnarray}\label{eq-terms-bnd}
\nonumber
&\, & 2 P_{k,E}(A) (1 - P_{k,E}(A)) = \P(\text{ exactly one of } {\bf X}  \text{ and } {\bf Z}^{(n)} \text{ is in } A) \\ 
&\, & = \text{ TERM(1) } + \text{ TERM(2) },
\end{eqnarray}
where
\begin{equation}\label{q-T1-def}
\text{ TERM(1) } =
\P(\text{ exactly one of } {\bf X} \text { and } {\bf Z}^{(n)} \text{ is in } A, \text{ and } d({\bf X}, {\bf Y}) < c_1 n),
\end{equation}

and 
\begin{equation}\label{q-T2-def}
\text{TERM(2)} = \P(\text{ exactly one of } {\bf X} \text { and } {\bf Z}^{(n)} \text{ is in } A, \text{ and } d({\bf X}, {\bf Y}) \geq c_1 n). 
\end{equation}

From now on we will use (with $x, y \in \{0,1\}^E$ and $B \subset \{0,1\}^E$) the notation $!(x,y,B)$ for
``exactly one of $x$ and $y$ is in $B$".

Recall from the construction and definitions in Section \ref{Prelim}, that ${\bf Z}^{(t)} = {\bf Z}^{(\tau)}$ for all $t \geq \tau$.
Hence, we get

\begin{equation}\label{q-T1-r}
\text{ TERM(1) } =
\sum_{t=1}^n \P(\tau = t, \, !({\bf X }, {\bf Z}^{(t)}, A), \, d({\bf X}, {\bf Y}) < c_1 n),
\end{equation}

and

\begin{equation}\label{q-T2-r}
\text{TERM(2)} \leq \sum_{t=1}^n \P(t \leq \tau, \, !({\bf Z}^{(t-1)}, {\bf Z}^{(t)}, A), \, d({\bf X}, {\bf Y}) \geq c_1 n).
\end{equation}

Further, note that if ${\bf e}_t = e$ and exactly one of ${\bf Z}^{(t-1)}$ and ${\bf Z}^{(t)}$ is in $A$, then $(e, \sigma(e))$ is a 
pivotal pair for ${\bf Z}^{(t)}$. Hence,

\begin{equation}\label{q-T2-rr}
\text{TERM(2)} \leq \sum_{t=1}^n \sum_e \sum_f \P(t \leq \tau, \, {\bf e}_t = e, \,
(e,f) \text { is a piv. pair in } {\bf Z}^{(t)}, \, \sigma(e) = f, \,
d({\bf X}, {\bf Y}) \geq c_1 n).
\end{equation}

To handle \eqref{q-T1-r} and \eqref{q-T2-rr} further, we need the following key lemma.
Recall that the event $\{t \leq \tau\}$ is ${\bf X}_{\bf{e}_{[t]}}$-measurable (and even ${\bf X}_{\bf{e}_{[t-1]}}$-measurable).

\begin{lem}\label{lem-key}
Suppose ${\bf X}_{\bf{e}_{[t]}}$ is given and is such that $t \leq \tau$, {\it and} that the set of all matching pairs for $({\bf X}, {\bf Y})$ 
is given.
The conditional distribution of ${\bf Z}^{(t)}$ given the above information
is equal to the conditional distribution of ${\bf Y}$ given that same information.
\end{lem}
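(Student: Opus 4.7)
The plan is to condition on all the randomness except the $(1,0)/(0,1)$ type assignment on matched pairs that lie entirely outside $S := {\bf e}_{[t]}$, and then exploit an $X \leftrightarrow Y$ symmetry on each such pair. To begin, I would fix a compatible realization of the conditioning event: a specific query set $S$, specific values ${\bf X}_S = x_S$ (these together determine $S$ through the decision tree and also encode the constraint $t \leq \tau$), and a specific matching $\sigma = \pi$, which amounts to a collection $M$ of matched pairs together with the induced set of singletons $E \setminus \bigcup_{\{e,f\} \in M} \{e,f\}$.

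My next step would be to describe the joint conditional distribution of $({\bf X}, {\bf Y})$ given the above data. The joint law of $({\bf X}, {\bf Y}, \sigma)$ is uniform over triples $(x, y, \pi)$ with $|x| = |y| = k$ such that $\pi$ is a valid matching for $(x, y)$, and the number of matchings for any such pair is $p!$, where $p = |M|$. Hence, conditional on $\sigma = \pi$, the law of $({\bf X}, {\bf Y})$ is uniform over compatible configurations and factorises as follows: for each matched pair $\{e,f\} \in M$, the two possible type assignments $(X_e, X_f) \in \{(1,0), (0,1)\}$ are equally likely and independent across pairs, while on singletons the common value $X = Y$ is uniform subject to the constraint that the singletons contribute exactly $k - p$ ones. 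Conditioning further on ${\bf X}_S = x_S$ then determines everything on $S$ and the type assignment of every matched pair meeting $S$ (via its known endpoint in $S$), but leaves untouched both the uniform iid type assignment on matched pairs entirely outside $S$ and the uniform-subset law of singleton values outside $S$.

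Then I would invoke Observation \ref{Z-def-obs} to compare ${\bf Z}^{(t)}$ with ${\bf Y}$ coordinate by coordinate. Since $\sigma$ fixes singletons and maps each disagreement point in $S$ to its partner, a matched pair lies entirely outside $S \cup \sigma(S)$ if and only if it lies entirely outside $S$. By the observation, ${\bf Z}^{(t)}$ agrees with ${\bf Y}$ on $S \cup \sigma(S)$; it also agrees with ${\bf Y}$ on singletons outside $S$, since there ${\bf X} = {\bf Y}$; and on each matched pair $\{e,f\}$ entirely outside $S$ one has ${\bf Z}^{(t)}_{\{e,f\}} = {\bf X}_{\{e,f\}}$, whereas ${\bf Y}_{\{e,f\}}$ is the complementary type assignment $(1 - X_e, 1 - X_f)$.

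To conclude, I would use the product/uniform structure identified in the second step: conditional on the given information, ${\bf X}_{\{e,f\}}$ for each such pair $\{e,f\}$ is uniform on $\{(1,0), (0,1)\}$, independently across pairs, and so is its complement ${\bf Y}_{\{e,f\}}$. Hence ${\bf Z}^{(t)}_{\{e,f\}}$ and ${\bf Y}_{\{e,f\}}$ have the same conditional distribution and are jointly independent across such pairs, which, combined with the deterministic agreement on all remaining coordinates, yields the stated equality of conditional distributions. I expect the main delicate step to be the second one, namely correctly identifying the product/uniform structure of the conditional law of $({\bf X}, {\bf Y})$ given $\sigma = \pi$; once that is in place, the rest reduces to a one-line symmetry argument per pair outside $S$.
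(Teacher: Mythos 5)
Your proposal is correct and follows essentially the same route as the paper: identify the coordinates where ${\bf Z}^{(t)}$ and ${\bf Y}$ agree deterministically (points in ${\bf e}_{[t]}\cup\sigma({\bf e}_{[t]})$ and all singletons), and observe that on each matched pair entirely outside ${\bf e}_{[t]}$ the type assignment is, conditionally, uniform on $\{(0,1),(1,0)\}$ independently across pairs, so complementing there is distribution-preserving. The paper phrases this via an intermediate Claim that additionally conditions on singleton values (then coarsens), and leaves the key uniformity-and-independence step as ``not hard to see''; you instead argue directly at the Lemma's level of conditioning and make that step explicit via the factorisation of the conditional law of $({\bf X},{\bf Y})$ given $\sigma$, but the underlying idea is the same.
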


In fact, we prove something stronger, namely the following claim:

\begin{claim}\label{claim-key}
The lemma above still holds if we add, to the information conditioned on, the values ${\bf X}_s$ of all singletons $s \not\in {\bf e}_{[t]}$.
\end{claim}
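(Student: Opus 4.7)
The plan is to partition $E$ according to the conditioning: one part on which both $\mathbf{Z}^{(t)}$ and $\mathbf{Y}$ will be deterministically fixed, and a second part on which they remain random but have identical conditional distributions by a pair-flip symmetry. Let $D$ denote the disagreement set (determined by the given matching), let $S := E \setminus D$ be the singleton set, let $M$ be the family of matched pairs, let $M_{\mathrm{in}}$ be the sub-family of pairs meeting $\mathbf{e}_{[t]}$, and let $M_{\mathrm{out}} := M \setminus M_{\mathrm{in}}$.

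First I would verify the deterministic part. Since the decision tree is a deterministic function of $\mathbf{X}_{\mathbf{e}_{[t-1]}}$, the sequence $\mathbf{e}_{[t]}$ and the indicator of $\{t \le \tau\}$ are already implied by the conditioning. On every singleton one has $\mathbf{X}_e = \mathbf{Y}_e = \mathbf{Z}^{(t)}_e$, and the common value is given directly (via $\mathbf{X}_{\mathbf{e}_{[t]}}$ inside $\mathbf{e}_{[t]}$, and via the extra singleton data of the Claim outside). For each pair $\{e,f\} \in M_{\mathrm{in}}$, picking $e \in \mathbf{e}_{[t]}$, the matching constraint forces $\mathbf{Y}_e = 1 - \mathbf{X}_e$, $\mathbf{X}_f = 1 - \mathbf{X}_e$, $\mathbf{Y}_f = \mathbf{X}_e$; and Observation~\ref{Z-def-obs} gives $\mathbf{Z}^{(t)}_e = \mathbf{Y}_e$ and $\mathbf{Z}^{(t)}_f = \mathbf{Y}_f$. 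Thus $\mathbf{Z}^{(t)} = \mathbf{Y}$ holds deterministically on $S \cup \bigcup M_{\mathrm{in}}$.

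The heart of the proof is to handle $\bigcup M_{\mathrm{out}}$. The joint law of $(\mathbf{X}, \mathbf{Y}, \sigma)$ assigns weight $\binom{n}{k}^{-2}\bigl((|D|/2)!\bigr)^{-1}$ to each admissible triple, and under the natural bijection $(\mathbf{X}, \mathbf{Y}, \sigma) \leftrightarrow (\text{singleton values},\, M,\, \text{per-pair orientation})$ this pushes forward to the uniform measure on admissible encodings with fixed $|D|$. Consequently, conditional only on the pair partition and all singleton values, the $|M|$ orientations are i.i.d.\ uniform on $\{0,1\}$; further conditioning on $\mathbf{X}_{\mathbf{e}_{[t]}}$ fixes the orientations on $M_{\mathrm{in}}$ while leaving those on $M_{\mathrm{out}}$ i.i.d.\ uniform. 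A short check, using that $\sigma$ is an involution, shows that no endpoint of a pair in $M_{\mathrm{out}}$ lies in $\mathbf{e}_{[t]} \cup \sigma(\mathbf{e}_{[t]})$ (else the partner would sit in $\mathbf{e}_{[t]}$), so Observation~\ref{Z-def-obs} gives $\mathbf{Z}^{(t)} = \mathbf{X}$ on $\bigcup M_{\mathrm{out}}$, whereas $\mathbf{Y}$ realises the complementary orientation there. Because the global orientation-flip on $M_{\mathrm{out}}$ is a measure-preserving bijection of the i.i.d.\ uniform law, $\mathbf{Z}^{(t)}$ and $\mathbf{Y}$ have identical conditional distributions on $\bigcup M_{\mathrm{out}}$, which together with the deterministic part establishes the Claim. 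The main obstacle I expect is precisely this symmetry step: one must check that conditioning on the much richer object $\mathbf{X}_{\mathbf{e}_{[t]}}$ --- not merely on the $M_{\mathrm{in}}$-orientations --- introduces no back-door correlations among the $M_{\mathrm{out}}$ orientations, and this ultimately reduces to the product form of the joint measure after passing to the encoding above.
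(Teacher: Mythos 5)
Your proposal is correct and takes essentially the same route as the paper: identify the singletons and $M_{\mathrm{in}}$-pairs as the deterministic part, note that on the $M_{\mathrm{out}}$-pairs $\mathbf{Z}^{(t)} = \mathbf{X}$ while $\mathbf{Y}$ is the per-pair flip, and conclude via the i.i.d.\ uniform-orientation symmetry. The only departure is that you spell out the independence of the $M_{\mathrm{out}}$-orientations by pushing forward the joint law of $(\mathbf{X},\mathbf{Y},\sigma)$ under the encoding, whereas the paper simply asserts this as ``not hard to see from the basic properties of $P_{k,E}$''.
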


\begin{proof} (of Claim \ref{claim-key})
For convenience we assume a predetermined (deterministic) ordering, simply denoted by `$<$', on the set $E$.

The information on which we condition provides the following knowledge about ${\bf Z}^{(t)}$ and $\bf Y$:
\begin{enumerate}
\item The exact ${\bf Y}$- and ${\bf Z}^{(t)}$-values at all points $g, h$ with $g \in {\bf e}_{[t]}$ and $h = \sigma(g) \neq g$. (Namely,
${\bf Z}^{(t)}_g = {\bf Y}_g = 1 - {\bf X}_g$ and ${\bf Z}^{(t)}_h = {\bf Y}_h = 1 - {\bf X}_h = {\bf X}_g).$

\item The exact ${\bf Y}$- and ${\bf Z}^{(t)}$-values at every singleton $s$. (Namely, ${\bf Z}^{(t)}_s = {\bf Y}_s = {\bf X}_s$).
\end{enumerate}

So we are left with the pairs of points $g ,h \not\in {\bf e}_{[t]}$ for which $\sigma(g) = h$ and $g < h$.
These pairs of points are given by the information conditioned on, but their precise ${\bf X}$-, ${\bf Y}$- and ${\bf Z}^{(t)}$- values are not.
However, it is not hard to see from the basic properties of the distribution $P_{k,E}$ that, conditioned on the information we 
have, for each such pair $(g,h)$ the following holds:
$({\bf X}_g, {\bf X}_h)$ is either $(0,1)$ or $(1,0)$, both with (conditional) probability $1/2$, 
independently of the other such pairs.
Finally, using that for each such pair $(g,h)$, 
$${\bf Z}^{(t)}_g = {\bf X}_g = 1 - {\bf Y}_g, \text{ and } {\bf Z}^{(t)}_h = {\bf X}_h = 1 - {\bf Y}_h,$$
it follows that, conditioned on the information mentioned in the Claim, also $({\bf Z}^{(t)}_g, {\bf Z}^{t)}_h)$ is $(0,1)$ or $(1,0)$, both with 
probability $1/2$, independent of the other pairs. And, similarly, this also holds for the ${\bf Y}$-values at these pairs.

Concluding, we get that ${\bf Y}$ and ${\bf Z}^{(t)}$ indeed have the same conditional distribution, given the information mentioned in the Claim.
This completes the proof of Claim \ref{claim-key}.
\end{proof}

\begin{proof} (of Lemma \ref{lem-key}).
Since the information conditioned on in the lemma is more coarse than that conditioned on in Claim \ref{claim-key}, the lemma follows
immediately.
\end{proof}

Now we apply Lemma \ref{lem-key} to further upper bound TERM(1) and TERM(2). As to the former, we first split the r.h.s.
of \eqref{q-T1-r} in two parts, giving:

\begin{eqnarray}\label{q-T1-rr}
\nonumber
\text{ TERM(1) } = \sum_{t=1}^n \P(\tau = t, \, {\bf X}\in A, \, {\bf Z}^{t} \not \in A, \, d({\bf X}, {\bf Y}) < c_1 n) \\ 
+ \sum_{t=1}^n \P(\tau =t, \, {\bf X} \not \in A, \, {\bf Z}^{(t)} \in A, \, d({\bf X}, {\bf Y}) < c_1 n).
\end{eqnarray}
Since, for each $t$, the event $\{\tau = t, \, {\bf X}\in A, \, d({\bf X}, {\bf Y}) < c_1 n\}$ is measurable w.r.t. the information conditioned on
in Lemma \ref{lem-key} (and is contained in the event $\{t \leq \tau\}$), that lemma allows us to replace ${\bf Z}^{t}$ by ${\bf Y}$ in
the first summation in the r.h.s. of \eqref{q-T1-rr}. And, analogously, this also holds for the second summation. So we get

\begin{eqnarray}\label{q-T1-rrr}
\nonumber
&\, & \text{TERM(1)} \\ \nonumber
&\, & = \sum_{t=1}^n \P(\tau = t, \, {\bf X} \in A, \, {\bf Y} \not\in A, \, d({\bf X}, {\bf Y}) < c_1 n) \\ \nonumber
&\, & + \sum_{t=1}^n \P(\tau =t, \, {\bf X} \not \in A, \, {\bf Y} \in A, \, d({\bf X}, {\bf Y}) < c_1 n) \\ 
&\, & = \P( !({\bf X}, {\bf Y}, A), \, d({\bf X}, {\bf Y}) < c_1 n).
\end{eqnarray}

We handle the last line of \eqref{q-T1-rrr} as follows, using symmetry (see also Remark \ref{rem-neg-c} below):

\begin{eqnarray}\label{q-T1-rrrr}
\nonumber
&\, &  \P( !({\bf X}, {\bf Y}, A), \, d({\bf X}, {\bf Y}) < c_1 n) \\ \nonumber
&\, & = 2 \P({\bf X} \in A, \, {\bf Y} \not \in A, \, d({\bf X}, {\bf Y}) < c_1 n) \\ \nonumber
&\, & \leq 2 \min\left(\P({\bf X} \in A, \, d({\bf X}, {\bf Y}) < c_1 n), \P({\bf X} \not\in A, \, d({\bf X}, {\bf Y}) < c_1 n)\right) \\
\nonumber
&\, &  = 2 \P( d({\bf X}, {\bf Y}) < c_1 n) \, \min(P_{k,E}(A), 1 - P_{k,E}(A)) \\ 
&\, &  \leq 4 P_{k,E}(A) \, (1 - P_{k,E}(A)) \, \P( d({\bf X}, {\bf Y}) < c_1 n),
\end{eqnarray}
where the third step uses that the random variables ${\bf X}$ and $d({\bf X}, {\bf Y})$ are independent (which follows
easily from the permutation invariance of $P_{k,E}$ and the independence of ${\bf X}$ and ${\bf Y}$).

So, combining \eqref{q-T1-rrr} and \eqref{q-T1-rrrr}, we finally get the following upper bound for TERM(1).

\begin{equation}\label{q-T1-rf}
\text{ TERM(1) } \leq 4 P_{k,E}(A) \, (1 - P_{k,E}(A)) \, \P(d({\bf X}, {\bf Y}) < c_1 n).
\end{equation}

\begin{rem}\label{rem-neg-c}
One might at first sight intuitively think that the two events in the last line of \eqref{q-T1-rrr} are
negatively correlated, which would allow us to make the computations above a bit more elegant and stronger: it would lead
to an upper bound for TERM(1) half that in \eqref{q-T1-rf} (and hence a factor $\P(d({\bf X}, {\bf Y}) < c_1 n)$ smaller
than the l.h.s. of \eqref{eq-terms-bnd}). However, there is a simple counterexample, showing that that negative correlation
is not true in general.
\end{rem}

Now we return to TERM(2) in \eqref{eq-terms-bnd}, and start with the upper bound \eqref{q-T2-rr}.
Note that, apart from the part ``$(e,f)$ is pivotal for ${\bf Z}^{(t)}$", the event in the r.h.s. of \eqref{q-T2-rr} is
measurable with respect to the information conditioned on in Lemma \ref{lem-key}.
Hence, by that lemma, we can replace ${\bf Z}^{(t)}$ by ${\bf Y}$ in \eqref{q-T2-rr}, and thus get

\begin{equation}\label{q-T2-rrr}
\text{TERM(2)} \leq \sum_{t=1}^n \sum_e \sum_f
\P(t \leq \tau, \, {\bf e}_t = e, \, (e,f) \text{ is piv. in } {\bf Y}, \, \sigma(e) = f, \, d({\bf X}, {\bf Y}) \geq c_1 n).
\end{equation}

Now, trivially,
\begin{eqnarray} \label{q-T2-rrrt}
\nonumber
&\, & \text{The probability in the r.h.s. of \eqref{q-T2-rrr}} \\ \nonumber
&\, & = \P(t \leq \tau, \, {\bf e}_t = e, \, (e,f) \text{ is piv. in } {\bf Y}) \\ 
&\, & \times \P(\sigma(e) = f, \, d({\bf X}, {\bf Y}) \geq c_1 n \, | \,
t \leq \tau, \, {\bf e}_t = e, \, (e,f) \text{ is piv. in } {\bf Y})
\end{eqnarray}

The first factor in the r.h.s. of \eqref{q-T2-rrrt} is equal to 
$$\P(t \leq \tau, \, {\bf e}_t = e) \, \P((e,f) \text{ is piv. in } {\bf Y}),$$
because $\{t \leq \tau, \, {\bf e}_t = e\}$ is ${\bf X}$-measurable and ${\bf X}$ and ${\bf Y}$ are independent.

The second factor in the r.h.s. of \eqref{q-T2-rrrt} is, obviously, 
$$ \leq \P(\sigma(e) = f \, | \, d({\bf X}, {\bf Y}) \geq c_1 n, \, t \leq \tau, \, {\bf e}_t = e, \, (e,f) \text{ is piv. in } {\bf Y}), $$
which in turn is at most $\frac{2}{c_1 n}$ because of the following. Let $0 \leq m \leq n$ be some integer.
If the pair $({\bf X}, {\bf Y})$ would be exactly given, and would have exactly $m$ disagreement points, the probability that a given
pair of points $g$, $h$, with $g \neq h$, is a matching pair would be at most $2/m$ (in fact it would be equal to $2/m$ if 
${\bf X}_g = {\bf Y}_h = 1 - {\bf Y}_g = 1 - {\bf X}_h$), and equal to $0$ otherwise).

So, putting the above things together, we finally get the following upper bound for TERM(2).

\begin{eqnarray}\label{q-T2-rrrf}
\nonumber
&\, & \text{TERM(2) } \\ \nonumber
&\, &  \leq \sum_{t=1}^{n} \, \sum_{e \in E} \, \sum_{f \neq e} \frac{2}{c_1 n} \, \P(t \leq \tau, \, {\bf e}_t = e) \, 
\P((e,f) \text{ is piv. in } {\bf Y}) \\ \nonumber 
&\, & \leq \sum_{t=1}^{n} \, \sum_{e \in E} \, \sum_{f \in E} \frac{2}{c_1 n} \,  \P(t \leq \tau, \, {\bf e}_t = e) \,
(I(e) + I(f)) \\ \nonumber
&\, &  = \frac{2}{c_1} \, \sum_{e \in E} I(e) \, \sum_{t=1}^n \P(t \leq \tau, \, {\bf e}_t = e) + 
\frac{2}{c_1} \, \frac{1}{n} \sum_{e \in E} \sum_{t=1}^{n} \P(t \leq \tau, \, {\bf e}_t = e) \, \sum_{f \in E} I(f) \\ 
&\, &  = \frac{2}{c_1} \left( \sum_{e \in E} I(e) \delta_e + \sum_{e \in E} I(e) \bar\delta \right),
\end{eqnarray}
where the second inequality holds because the event $A$ is increasing.

Combining \eqref{eq-terms-bnd}, \eqref{q-T1-rf} and \eqref{q-T2-rrrf}, we get

\begin{eqnarray}\label{eq-terms-bnd2}
\nonumber
&\, & 2 P_{k,E}(A) (1 - P_{k,E}(A)) \\ \nonumber
&\, & \leq 4 P_{k,E}(A) (1 - P_{k,E}(A))  \, \P(d({\bf X}, {\bf Y}) < c_1 n) \\
&\, & + \frac{2}{c_1} \left( \sum_{e \in E} I(e) \delta_e + \sum_{e \in E} I(e) \bar\delta \right).
\end{eqnarray}
We are now close to the completion of the proof of Theorem \ref{Maint}.

\begin{subsubsection}{Completion of the proof of Theorem \ref{Maint}}\label{cpl-proof-maint}

We first restrict to the special case in part (a) of the theorem.
So we take $n := |E|$ even, and $k = n/2$. It is well-known and easy to see that for this case 
$\E(d({\bf X}, {\bf Y})) = n/2$, and that, if we take $c_1$ smaller than $1/2$, then 
$\P(d({\bf X}, {\bf Y}) < c_1 n)$ converges to $0$ as $n \rightarrow \infty$. (By standard concentration-inequalities techniques
it even follows that this convergence is exponentially fast in $n$).
So, choosing $c_1$ equal to (say) $1/4$, we can take $N$ so that $\P(d({\bf X}, {\bf Y}) < c_1 n) \leq 1/4$ for all $n \geq N$.
Hence, by \eqref{eq-terms-bnd2} we get that, for all $n \geq N$,
\begin{eqnarray}\label{eq-prf(a)-f}
\nonumber
&\, & P_{\frac{n}{2}}(A) (1 - P_{\frac{n}{2}}(A)) \leq  \\ \nonumber
&\, & \frac{2}{c_1} \, \left( \sum_{e \in E} I(e) \delta_e + \sum_{e \in E} I(e) \bar\delta \right) \\ 
&\, & = 8 \, \left( \sum_{e \in E} I(e) \delta_e + \sum_{e \in E} I(e) \bar\delta \right).
\end{eqnarray}
Since there are only finitely many $n < N$, and for each $n$ only finitely many possibilities for the event $A$ and the
decision tree $T$, we  obtain part (a) of Theorem \ref{Maint} by replacing the factor $8$ in the upper bound above by a sufficiently large
constant $C$. This completes the proof of part (a) of the theorem. 

As to the proof of part (b) of the Theorem, we can modify (generalise) the argument for part (a) above in a straightforward way by 
choosing $c_1$ conveniently, depending on $\varepsilon$.

This completes the proof of Theorem \ref{Maint}.
%
 
\end{subsubsection}

\end{subsection}

\begin{subsection}{Remark on the construction in the proof}\label{sect-rem-proof}

The sequence of strings ${\bf Z}^{(t)}$, $0 \leq t \leq n$, constructed in Section \ref{sect-Proof-main} for the proof of Theorem \ref{Maint},
was tailor-made for $k$-out-of-$n$ distributions: it doesn't make sense for other distributions. 
In an earlier version of this paper we used the construction of \cite{DRT19}. That construction, fruitfully used in \cite{DRT19} for monotonic measures,
makes {\it in principle} sense for general distributions, which doesn't mean that it always leads to a useful inequality.

Our computations in the mentioned earlier version of the paper, using that construction for $k$-out-of-$n$ distributions,
gave an extra factor of order $\log n$ compared with the upper bound \eqref{eq-maint-a} (and \eqref{eq-maint-b}) in the current version.
We initially thought that
that extra factor was caused by a crude estimate at a later stage in the computations. However, as we briefly point out below, there is a simple example
of an event $A$ where the $\log n$ factor comes up inevitably by using that construction.

Central in the construction in \cite{DRT19} is the idea of encoding a string ${\bf X} = ({\bf X}_1, \cdots, {\bf X}_n)$ of (possibly dependent) 
$0-1$ valued random variables in terms of a sequence ${\bf U} = ({\bf U}_1, \cdots {\bf U}_n)$ of independent,
uniformly distributed on the interval $[0,1]$, random
variables. This encoding is informally as follows:
Let $\mu$ denote the distribution of ${\bf X}$. Let $u_1, \cdots, u_n$ be numbers between $0$ and $1$. These numbers can be seen as an input string in
the coding procedure. The output string $(x_1, \cdots, x_n)$,
denoted by $F^{\mu}(u_1, \cdots, u_n)$ is the following: If $u_1 < \mu(\omega_1 = 0)$, take $x_1 = 0$, otherwise
take $x_1 = 1$. Next, if $u_2 < \mu(\omega_2 = 0 \, | \, \omega_1 = x_1)$, take $x_2 = 0$, otherwise take $x_2 = 1$. Etcetera. 
It is well-known and easy to check that $F^{\mu}({\bf U})$ has distribution $\mu$. 
As shown (and used) in \cite{DRT19} this is still true (under a natural condition) if the order in which output values are assigned
to the points $1, \cdots, n$ is
not deterministic: the identity of the $j$th point to which an output value is assigned may depend on the identity of the 
first $j - 1$ points and the output values assigned to them. However, the example below involves only the case with deterministic order:

Consider our main result, Theorem \ref{Maint}, with  $n$ even, $E = \{1, \cdots, n\}$, $k = n/2$,
$A = \{\omega_n = 1\}$, and $T$ the (deterministic) decision tree where the points of $E$ are examined in the order $1, 2, 3, \cdots$. 
Note that then $\tau = n$.
The construction in \cite{DRT19} (see Section 2 in that paper, in particular the first lines of page 85,
with $f$ the indicator function of the event $A$ in our example) gives, for this simple case:

\begin{eqnarray}\label{eq-log-ex}
&\, &  2 P_{\frac{n}{2},n}(A) (1 - P_{\frac{n}{2},n}(A)) \\ \nonumber
&\, &  \leq \sum_{t=1}^n \P \left( !(F^{\mu}({\bf V}_1, \cdots, {\bf V}_{t-1}, {\bf U}_t, \cdots {\bf U}_n),
F^{\mu}({\bf V}_1, \cdots, {\bf V}_t, {\bf U}_{t+1}, \cdots {\bf U}_n), A \right),
\end{eqnarray}
where $\mu = P_{\frac{n}{2},n}$, and ${\bf U}_1, \cdots, {\bf U}_n, {\bf V}_1, \cdots, {\bf V}_n$ are independent random variables, 
uniformly distributed on the interval $[0,1]$.
To show that the r.h.s. of \eqref{eq-log-ex} is of order $\log n$, we will use the following lemma, which can be proved by a quite straightforward 
induction argument.
\begin{lem} \label{exch-rule1}
Let $m$ be a positive integer and let $1 \leq k \leq m-1$. Let ${\bf U}_1, \cdots, {\bf U}_m$ be independent random variables,
uniformly distributed on $[0,1]$. Let ${\bf Z} = F^{P_{k,m}}(\bf U)$ and ${\bf Z'} = F^{P_{k+1,m}}({\bf U})$, 
with ${\bf U} = {\bf U}_1, \cdots, {\bf U}_m$. \\
The pair $({\bf Z}, {\bf Z'})$ has the following distribution.
For all $\alpha, \beta \in \{0,1\}^m$,

\begin{equation} \label{cases-ex-r1}
\P({\bf Z} = \alpha, {\bf Z'} = \beta) =
\begin{cases}
\frac{1}{m-k} P_{k,m}(\alpha), & \text{ if } |\alpha| =k, |\beta| = k+1 \text{ and } \beta \geq \alpha \\
0 , & \text{otherwise}
\end{cases}
\end{equation}

\end{lem}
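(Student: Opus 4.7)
The plan is to prove the lemma by induction on $m$, using the recursive structure of the encoding $F^\mu$. As a preliminary single-step analysis, note that $P_{k,m}(\omega_1 = 0) = (m-k)/m$ and $P_{k+1,m}(\omega_1 = 0) = (m-k-1)/m$. Since both encodings are driven by the common variable ${\bf U}_1$ and the first threshold exceeds the second, the pair $({\bf Z}_1, {\bf Z'}_1)$ takes only the values $(0,0)$, $(0,1)$, $(1,1)$, with respective probabilities $(m-k-1)/m$, $1/m$ and $k/m$; the case $(1,0)$ never occurs, which already yields ${\bf Z'}_1 \geq {\bf Z}_1$. Because conditioning $P_{k,m}$ on $\omega_1 = z$ gives $P_{k-z,\,m-1}$ on the remaining coordinates, the residual string $({\bf Z}_2, \ldots, {\bf Z}_m)$ coincides with $F^{P_{k - {\bf Z}_1,\, m-1}}({\bf U}_2, \ldots, {\bf U}_m)$, and analogously for ${\bf Z'}$.

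The inductive step then proceeds by conditioning on $({\bf Z}_1, {\bf Z'}_1)$. In the $(0,0)$ branch the residual pair is $(F^{P_{k,m-1}}, F^{P_{k+1,m-1}})$ driven by the shared uniforms $({\bf U}_2,\ldots,{\bf U}_m)$, so the inductive hypothesis applies at parameters $(k,m-1)$. In the $(1,1)$ branch the residuals are $(F^{P_{k-1,m-1}}, F^{P_{k,m-1}})$, and the inductive hypothesis applies at $(k-1,m-1)$. In the $(0,1)$ branch both residuals equal $F^{P_{k,m-1}}({\bf U}_2, \ldots, {\bf U}_m)$, so ${\bf Z}$ and ${\bf Z'}$ agree on positions $2,\ldots,m$, with common marginal $P_{k,m-1}$.

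Finally, for an admissible pair $(\alpha,\beta)$ differing at some position $j$, I would compute $\P({\bf Z}=\alpha, {\bf Z'}=\beta)$ by selecting the branch determined by $(\alpha_1,\beta_1)$ and multiplying the step-$1$ probability by the inductive contribution (or, when $j=1$, by $P_{k,m-1}(\alpha_2,\ldots,\alpha_m)$). Each of the three cases collapses to $\frac{1}{(m-k)\binom{m}{k}}$ via the elementary identities $\binom{m-1}{k} = \frac{m-k}{m}\binom{m}{k}$ and $\binom{m-1}{k-1} = \frac{k}{m}\binom{m}{k}$; the base case $m=2$, $k=1$ is a direct two-line check. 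The main bookkeeping irritant I foresee is the boundary value $k = m-1$, where $P_{k+1,m-1}$ is not defined; fortunately the step-$1$ probability $(m-k-1)/m$ of the $(0,0)$ branch vanishes in that case and the ill-defined branch drops out automatically, so no special treatment is required.
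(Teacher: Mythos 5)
Your proof is correct and is precisely the ``quite straightforward induction argument'' that the paper asserts but does not write out; since the paper gives no proof, there is no alternative route to compare against. The first-step trichotomy $(0,0),(0,1),(1,1)$ with probabilities $\frac{m-k-1}{m},\frac{1}{m},\frac{k}{m}$ is right, the identification of the residual encodings as $F^{P_{k-\mathbf{Z}_1,\,m-1}}$ and $F^{P_{k+1-\mathbf{Z}'_1,\,m-1}}$ driven by the shared tail $(\mathbf{U}_2,\ldots,\mathbf{U}_m)$ is right, and all three branch computations collapse to $\frac{1}{(m-k)\binom{m}{k}}$ via the stated binomial identities.

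One small bookkeeping point you should add, symmetric to the $k=m-1$ boundary you do discuss: in the $(1,1)$ branch you invoke the inductive hypothesis at parameters $(k-1,m-1)$, which falls outside the stated range $1 \le k \le m-1$ when $k=1$, and unlike the $(0,0)$ branch at $k=m-1$, the branch probability $k/m$ does not vanish there. The cleanest fix is to run the induction for $0 \le k \le m-1$: for $k=0$, $\mathbf{Z}$ is deterministically the all-zero string and $\mathbf{Z}' \sim P_{1,m}$, so $\P(\mathbf{Z}=\alpha,\mathbf{Z}'=\beta)=\frac{1}{m}P_{0,m}(\alpha)$ whenever $|\beta|=1$, which is exactly the claimed formula. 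With that one-line extension the induction closes and the proof is complete.
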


\smallskip
{\bf Remark:} {\em Informally, the lemma says that, given that ${\bf Z} = \alpha$, ${\bf Z'}$ is `obtained' by randomly and uniformly
choosing one of the $0$'s in the
string $\alpha$ and
replacing it by $1$. So the pair $({\bf Z},{\bf Z'})$ represents the intuitively most natural coupling of $P_{k,m}$ and $P_{k+1,m}$.
}

Now we go back to the sum in the r.h.s. of \eqref{eq-log-ex}. Consider a term with $4 < t < n-4$. Note that the probability that 
${\bf V}_1, \cdots, {\bf V}_{t-1}$ are such that the number of
$1$'s at the positions $t, \cdots, n$ in the output string is $\in (\frac{n-t+1}{4}, \frac{3}{4} (n-t+1))$, 
is simply $P_{n/2,n}(|\omega_{[t,n]}| \in (\frac{n-t+1}{4}, \frac{3}{4} (n-t+1))$, which is larger than some constant $c_1 > 0$. 
Also note that, given that ${\bf V}_1, \cdots, {\bf V}_{t-1}$ satisfy the property in the previous sentence, the conditional
probability that $F^{\mu}({\bf V}_1, \cdots, {\bf V}_{t-1}, {\bf U}_t, \cdots {\bf U}_n)$ 
and $F^{\mu}({\bf V}_1, \cdots, {\bf V}_t, {\bf U}_{t+1}, \cdots {\bf U}_n)$ have value $1$ and $0$ at position $t$ 
respectively and $F^{\mu}({\bf V}_1, \cdots, {\bf V}_{t-1}, {\bf U}_t, \cdots {\bf U}_n)$ has value $0$ at position $n$,
is at least some constant $c_2 > 0$.
Because of this, and by using Lemma \ref{exch-rule1} with $m = n-t$, (and noting that for any $k$ the factor $1/(m-k)$
in \eqref{cases-ex-r1} is thus, in the current context,
at least $1/(n-t)$) it follows that the mentioned term in the r.h.s. of \eqref{eq-log-ex} is at least
$c_1 \times  c_2 \times \frac{1}{n-t}$. Summing this over all $t \in (4, n - 4)$, shows that the
r.h.s. of \eqref{eq-log-ex} is larger than some constant times $\log n$.
On the other hand, for this special example the r.h.s of \eqref{eq-maint-a} in our main result is (using that $I(e) = 0$ for
all $e \neq n$ in this example) bounded by a constant. This example explains why we needed a different construction from that in \cite{DRT19} 
to get rid of the $\log n$ factor.

\end{subsection}
\end{section}

\begin{section}{Box crossing probabilities in a percolation model with a fixed number of occupied vertices} \label{sect-appl}
Now we illustrate our main result, Theorem \ref{Maint}, by using it in the study of a percolation model on a
box where the number of occupied vertices is 
fixed. We will also use a few basic results from the literature on the `standard' percolation model (Bernoulli percolation),
where the states of the vertices (or edges) are {\it independent} of each other.
See e.g. \cite{Gr99} and \cite{Gr18} for a general introduction to Bernoulli percolation, and many results and references.

Consider an $R \times R$ box in the triangular lattice.  
More precisely, in terms of the standard embedding of this lattice in the plane (which we identify with the set of complex numbers $\C$),
the box we consider is the graph
with vertex set $V_R :=  \{x + y \exp(i \pi / 3)\, : \, x,y \in \Z, \, 0 \leq x, y \leq R-1\}$,
and where two vertices $v$ and $w$ share an edge iff $|v-w| = 1$.
Each vertex can be {\it vacant} (which corresponds with value $0$) or {\it occupied} (value $1$).

Let $A_R$ denote the event that there is an occupied path which crosses the box horizontally.
In ordinary (Bernoulli) percolation, where the vertices are independently occupied with probability $p$ and vacant with probability $1-p$ (the
corresponding distribution will be denoted by $P_p$), 
it is well-known that $P_{\frac{1}{2}}(A_R) = 1/2$ (which follows from a simple symmetry
argument), and that the expected number of pivotal vertices (and hence, by the well-known Margulis-Russo formula, 
also $\frac{d}{d p} P_p(A_R) \mid_{p = 1/2}$)
grows at least as a power of $R$.
In fact, very sharp versions of this result are known, see \cite{SW01}. 

We will study the same event $A_R$ but now for the model where a fixed number, denoted by $k$, of the vertices is occupied. More precisely,
the probability measure is now $P_{k, R^2} = P_{k,V_R}$. In particular we will study the case where $R$ is 
even and $k=R^2/2$.

Again, from symmetry, $P_{\frac{R^2}{2}, R^2}(A_R) = 1/2$.
We will show an analog of the Bernoulli percolation result mentioned above, namely that
the `discrete derivative' (with respect to the fraction of $1's$) of the above probability is
again larger than a constant times a power of $R$.
In fact we will, using Theorem \ref{Maint}, prove Theorem \ref{thm-cross-piv} below, from which Corollary \ref{cor-cross-deriv} below follows
easily.
Recall the definition of $0$-pivotal from Section \ref{Intr} (two paragraphs below \eqref{PkE-def}).
\begin{rem}\label{rem-pivot-cr}
Although we don't use this in the proof of Theorem \ref{thm-cross-piv}, we note that (as is well-known), for this specific event $A_R$, a vertex
$v \in V_R$ is $0$-pivotal iff $v$ is vacant and there are four disjoint paths, each starting from a neighbour of $v$ to the boundary of the box: two occupied paths to 
the left and the right side, respectively, and two vacant paths to the top and the bottom side, respectively. See also Section \ref{sec:alt-proof}.
\end{rem}

\begin{thm} \label{thm-cross-piv}
Let $N_R^0$ denote the number of vertices that are $0$-pivotal for the event $A_R$, and let
$E_{\frac{R^2}{2}, R^2}(N_R^0)$ denote its expectation w.r.t. the distribution $P_{\frac{R^2}{2}, R^2}$.
There are $\alpha > 0$ and $C>0$ such that, for all even $R \geq 2$,
\begin{equation}\label{eq-thm4.1}
E_{\frac{R^2}{2}, R^2}(N_R^0) \geq C R^{\alpha}.
\end{equation}
\end{thm}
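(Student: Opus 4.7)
The plan is to apply Theorem \ref{Maint}(a) with $E = V_R$, $k = R^2/2$ and the increasing event $A = A_R$. Since $P_{\frac{R^2}{2}, R^2}(A_R) = 1/2$, the left-hand side of \eqref{eq-maint-a} equals $1/4$, so it suffices to exhibit a family of decision trees (which we can randomize over) whose revealments are of polynomial order $R^{-\alpha}$. For each column of $V_R$, indexed by $s$, I would define the decision tree $T_s$ that first queries every vertex in column $s$, and then performs a standard breadth-first exploration of the occupied cluster of every occupied vertex discovered on that column: whenever a newly queried vertex is occupied, all of its unqueried neighbours are enqueued; a branch halts at a vacant vertex. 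Every horizontal occupied crossing must meet column $s$, so $T_s$ correctly determines $A_R$ in the sense of \eqref{def-tau}. A vertex $e$ is queried by $T_s$ iff $e$ lies in column $s$ or some vertex of $\{e\}\cup N(e)$ is occupied and connected to column $s$ by an occupied path.

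Averaging over $s$ uniformly, and using that for any fixed vertex $e'$ the number of columns touched by the occupied cluster of $e'$ is at most $W(e')+1$, where $W(e')$ denotes the horizontal extent (maximum minus minimum of column indices) of that cluster, we obtain
$$\bar\delta_e \; := \; \frac{1}{R}\sum_{s} \delta_e(T_s) \; \leq \; \frac{C_1}{R}\Bigl(1 + \max_{e'\in\{e\}\cup N(e)} E_{\frac{R^2}{2}, R^2}[W(e')]\Bigr),$$
with $C_1$ depending only on the lattice. The core estimate becomes $E_{\frac{R^2}{2}, R^2}[W(e')] \leq C_2 R^{1-\alpha}$ for some $\alpha>0$. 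Since $W(e')\geq L$ forces $e'$ to be joined by an occupied path to some vertex at column-distance $\geq L/2$, it suffices to bound $P_{\frac{R^2}{2}, R^2}(e'\leftrightarrow \partial B(e', L))$, which is increasing in $\omega$. An elementary coupling (draw from $P_{k,n}$ and flip one uniformly chosen $0$ to a $1$) shows that $k\mapsto P_{k,n}(A)$ is non-decreasing in $k$ for every increasing $A$, and combined with $P_{1/2}(|\omega|\geq R^2/2)\geq 1/2$ this yields the change-of-measure bound $P_{\frac{R^2}{2}, R^2}(A)\leq 2\, P_{1/2}(A)$ for every increasing event $A$. Plugging in the classical polynomial one-arm decay $P_{1/2}(0\leftrightarrow \partial B(0, L))\leq C_3 L^{-\alpha}$ for critical site percolation on the triangular lattice (standard RSW/Kesten input; any positive exponent suffices) and summing over $L\leq R$ gives $E_{\frac{R^2}{2}, R^2}[W(e')]\leq C_4 R^{1-\alpha}$, and therefore $\bar\delta_e \leq C_5 R^{-\alpha}$ uniformly in $e$.

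Applying Theorem \ref{Maint}(a) to each deterministic $T_s$, averaging over $s$, and noting that $\bar{\bar\delta}:=\frac{1}{R^2}\sum_e \bar\delta_e$ also satisfies $\bar{\bar\delta}\leq C_5 R^{-\alpha}$, we get
$$\frac{1}{4} \; \leq \; C\sum_e I(e)\,\bar\delta_e + C\sum_e I(e)\,\bar{\bar\delta} \; \leq \; 2C C_5\, R^{-\alpha}\sum_e I(e) \; = \; 2C C_5\, R^{-\alpha}\, E_{\frac{R^2}{2}, R^2}[N_R^0],$$
which rearranges to $E_{\frac{R^2}{2}, R^2}[N_R^0] \geq c R^\alpha$ as desired. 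The main obstacle in this plan is the passage from critical Bernoulli percolation to the fixed-density measure; fortunately it is handled cleanly by the elementary monotonicity $k\mapsto P_{k,n}(A)$ for increasing $A$, which is why only a minimum of Bernoulli-percolation input is actually needed once Theorem \ref{Maint} is in hand.
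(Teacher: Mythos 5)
Your proposal is correct, and it follows the same overall strategy as the paper (apply Theorem \ref{Maint} with $P_{\frac{R^2}{2},R^2}(A_R)=1/2$, average over a one-parameter family of decision trees, bound the averaged revealments by a one-arm estimate under $P_{1/2}$ transferred to $P_{\frac{R^2}{2},R^2}$ via the comparison of Observation \ref{triv-obs}), but it uses a genuinely different family of decision trees. The paper follows Schramm and Steif \cite{SS10}: for each point $v_0$ on the right side it runs the two interface paths $\beta_{v_0},\beta'_{v_0}$, and the revealment of a hexagon $H$ under $T_{v_0}$ is controlled by the probability of a monochromatic arm from $H$ of length about $|z_0(H)-v_0|$. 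You instead index trees by a column $s$: reveal all of column $s$, then do a BFS of the occupied clusters meeting that column; the revealment of $e$ under $T_s$ is controlled by whether the cluster of some $e'\in\{e\}\cup N(e)$ reaches column $s$, and averaging over $s$ turns this into an expected horizontal cluster-extent $E[W(e')]$, which you bound by $\sum_{L\le R} P_{\frac{R^2}{2},R^2}(e'\leftrightarrow\partial B(e',L/2))\lesssim R^{1-c_4}$ using exactly Observation \ref{triv-obs} and \eqref{1arm-bnd}. Your trees reveal more sites (a full column plus entire clusters and their vacant frontier) but are elementary to describe and avoid any reliance on the interface construction; both choices give a revealment of order $R^{-c_4}$, which is all that is needed. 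One small point of care, which you handle implicitly but is worth stating: the formal decision tree must specify a query at every time step, so after the BFS terminates the remaining vertices are revealed in an arbitrary deterministic order; this does not affect $\tau$ or the revealments since $A_R$ is already determined by the time the BFS of the clusters meeting column $s$ is complete (every horizontal crossing meets column $s$). With that understood, the averaging over $s$, the identity $\frac{1}{R}\sum_s\bar\delta(A_R,T_s)=\bar{\bar\delta}$, and the final rearrangement are all correct.
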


\begin{rem}\label{rem-th4.1}
{\em (a)} The proof of Theorem \ref{thm-cross-piv} in Section \ref{sec-proof-thm-5.1} uses our OSSS-type result for $k$-out-of-$n$ measures (Theorem \ref{Maint})
and a minimal amount of
knowledge of Bernoulli percolation:
essentially only RSW (and FKG), which are pre-1979 results. \\
{\em (b)} Theorem \ref{thm-cross-piv} (and a stronger version) can also be proved without using Theorem \ref{Maint}, but such a proof would (as far
as we know) require much heavier results from Bernoulli percolation, see Section \ref{sec:alt-proof}. \\
{\em (c)} We are not aware of earlier mathematically rigorous work on percolation with a fixed number of occupied vertices or edges.
In the physics literature, such models have been studied and compared with Bernoulli percolation: see the paper \cite{HBD} where heuristic
predictions are given for finite-size corrections of various quantities. \\
{\em (d)}  Analogs of Theorem \ref{thm-cross-piv} and Corollary \ref{cor-cross-deriv}
for {\it bond} percolation on the {\it square} lattice can be proved in
a very similar way.
\end{rem}

\begin{cor} \label{cor-cross-deriv}
For all even $R$, 

\begin{equation}\label{eq-cross-piv}
\frac{P_{\frac{R^2}{2} + 1, R^2}(A_R) - P_{\frac{R^2}{2}, R^2}(A_R)}{1/R^2} \geq 2 C R^{\alpha},
\end{equation}
with $C$ and $\alpha$ as in Theorem \ref{thm-cross-piv}.\\
\end{cor}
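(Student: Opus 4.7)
\textbf{Proof plan for Corollary \ref{cor-cross-deriv}.}

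The plan is to express the numerator $P_{\frac{R^2}{2}+1,R^2}(A_R) - P_{\frac{R^2}{2},R^2}(A_R)$ exactly in terms of the expected number of $0$-pivotals, so that Theorem \ref{thm-cross-piv} immediately yields the corollary. The tool is the natural monotone coupling of $P_{k,n}$ and $P_{k+1,n}$ described in Lemma \ref{exch-rule1}: sample $\mathbf{Z} \sim P_{k,R^2}$ with $k = R^2/2$, then obtain $\mathbf{Z}'$ by picking uniformly at random one of the $R^2-k = R^2/2$ coordinates $e$ with $\mathbf{Z}_e = 0$ and setting $\mathbf{Z}'_e = 1$ (all other coordinates unchanged). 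By Lemma \ref{exch-rule1}, $\mathbf{Z}'$ has distribution $P_{k+1, R^2}$.

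Since $\mathbf{Z} \leq \mathbf{Z}'$ coordinatewise and $A_R$ is increasing, the event $\{\mathbf{Z} \in A_R, \mathbf{Z}' \notin A_R\}$ is empty, so
\begin{equation*}
P_{\frac{R^2}{2}+1,R^2}(A_R) - P_{\frac{R^2}{2},R^2}(A_R) = \P\bigl(\mathbf{Z} \notin A_R,\ \mathbf{Z}' \in A_R\bigr).
\end{equation*}
The event on the right occurs precisely when the (uniformly chosen) flipped vertex $e$ is $0$-pivotal for $A_R$ in $\mathbf{Z}$: indeed $\mathbf{Z}_e = 0$ by construction, and flipping it to $1$ moves $\mathbf{Z}$ into $A_R$, which by the monotonicity of $A_R$ is exactly the $0$-pivotal condition. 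Conditioning on $\mathbf{Z}$, the conditional probability of picking a $0$-pivotal vertex is $N_R^0(\mathbf{Z})/(R^2/2)$, so
\begin{equation*}
P_{\frac{R^2}{2}+1,R^2}(A_R) - P_{\frac{R^2}{2},R^2}(A_R) = \E_{\frac{R^2}{2},R^2}\!\left[\frac{N_R^0}{R^2/2}\right] = \frac{2}{R^2}\, \E_{\frac{R^2}{2},R^2}(N_R^0).
\end{equation*}

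Dividing by $1/R^2$ and applying Theorem \ref{thm-cross-piv} gives
\begin{equation*}
\frac{P_{\frac{R^2}{2}+1,R^2}(A_R) - P_{\frac{R^2}{2},R^2}(A_R)}{1/R^2} = 2\, \E_{\frac{R^2}{2},R^2}(N_R^0) \geq 2C R^\alpha,
\end{equation*}
which is the desired bound. There is no real obstacle here: the only non-trivial input is the coupling of Lemma \ref{exch-rule1}, which is already established, and the identification of the relevant flipping events with $0$-pivotals, which uses only that $A_R$ is increasing.
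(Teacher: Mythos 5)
Your proof is correct and follows essentially the same route as the paper: the paper derives the corollary by applying its Observation \ref{lem-russo} (the discrete Margulis--Russo identity $P_{k+1,n}(A)-P_{k,n}(A)=\tfrac{1}{n-k}E_{k,n}(N_A^0)$) with $n=R^2$, $k=R^2/2$, and then invoking Theorem \ref{thm-cross-piv}. The only difference is that you also supply the (omitted) proof of that observation via the monotone coupling, which is exactly the argument the paper indicates it would use.
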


Note that the denominator in the l.h.s. of \eqref{eq-cross-piv} is the increase of the fraction of occupied vertices when the parameter 
changes from $R^2/2$ to $R^2/2 + 1$; so the l.h.s. of \eqref{eq-cross-piv} can indeed be interpreted as a `discrete derivative'.

The proof of Theorem \ref{thm-cross-piv} will be given in Section \ref{sec-proof-thm-5.1} (after stating a simple inequality and a standard ingredient
from Bernoulli percolation in Section \ref{sect-ingr-thm-cross}). We will now first show how
Corollary \ref{cor-cross-deriv} follows from that theorem. For that we use Observation \ref{lem-russo} below,
which holds for all increasing events and is an analog of the well-known Margulis-Russo formula for product measures.
Its proof (which we omit) uses a straightforward coupling of $P_{k+1,n}$ and $P_{k,n}$, is simpler than that of the Margulis-Russo formula, and is probably,
implicitly or explicitly, already in the literature.

\begin{obs} \label{lem-russo}
Let $n \geq 1$ and let $A \subset \{0,1\}^n$ be an increasing event. Let $N_A^0$ denote the number of vertices that are $0$-pivotal for $A$,
and $E_{k,n}(N_A^0)$ its expectation w.r.t. the distribution $P_{k,n}$. 
\smallskip
For all $k \leq n-1$,
\begin{equation} \label{russo-ineq}
P_{k+1,n}(A) - P_{k,n}(A) = \frac{1}{n-k} E_{k,n}(N_A^0).
\end{equation}

\end{obs}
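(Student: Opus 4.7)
\textbf{Proof plan for Observation \ref{lem-russo}.} The plan is to use a one-step coupling of $P_{k,n}$ and $P_{k+1,n}$: I would draw ${\bf X} \sim P_{k,n}$, then pick a coordinate $F$ uniformly at random among the $n-k$ coordinates $e$ with ${\bf X}_e=0$, and set ${\bf Y} := {\bf X}^{(F)}$. This is the natural reverse of the coupling described in the remark following Lemma \ref{exch-rule1}, and it is designed so that ${\bf X} \leq {\bf Y}$ always holds.

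First, I would check that ${\bf Y}$ has the correct marginal $P_{k+1,n}$. For any $\beta \in \Om_{k+1,n}$, the strings $\alpha \in \Om_{k,n}$ with $\alpha \leq \beta$ are precisely the $k+1$ strings obtained by zeroing out one of the $1$'s of $\beta$, so
\[
\P({\bf Y}=\beta) \, = \, (k+1) \binom{n}{k}^{-1} \, \frac{1}{n-k} \, = \, \binom{n}{k+1}^{-1},
\]
where the last equality uses the elementary identity $\binom{n}{k+1}(n-k) = \binom{n}{k}(k+1)$.

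Second, I would exploit the monotonicity of $A$. Since ${\bf X} \leq {\bf Y}$ and $A$ is increasing, $I_A({\bf Y}) - I_A({\bf X}) \in \{0,1\}$, and it equals $1$ precisely on the event $\{{\bf X} \notin A, \, {\bf Y} \in A\}$. Conditionally on ${\bf X}$, this event occurs iff the randomly chosen zero $F$ satisfies ${\bf X}^{(F)} \in A$ while ${\bf X} \notin A$, i.e. iff $F$ is one of the $N_A^0({\bf X})$ sites that are $0$-pivotal for ${\bf X}$. Note that no case split on whether ${\bf X} \in A$ is needed: if ${\bf X} \in A$, then for every $e$ with ${\bf X}_e=0$ monotonicity gives ${\bf X}^{(e)} \in A$, so $N_A^0({\bf X})=0$. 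Since $F$ is uniform on the $n-k$ zeros of ${\bf X}$, the conditional probability of the event is $N_A^0({\bf X})/(n-k)$, and taking expectations gives \eqref{russo-ineq}.

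I do not foresee a genuine obstacle: the argument is essentially three lines once the coupling is set up. The only piece requiring a small calculation is the marginal check for ${\bf Y}$, and the only subtle point is the observation that monotonicity makes the two cases ${\bf X} \in A$ and ${\bf X} \notin A$ uniform, so the expectation identity follows directly without separating them.
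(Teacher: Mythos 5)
Your proof is correct and is precisely the ``straightforward coupling of $P_{k+1,n}$ and $P_{k,n}$'' that the paper alludes to when it omits the proof of Observation~\ref{lem-russo}; the marginal check, the monotone-coupling step, and the observation that $N_A^0({\bf X})=0$ when ${\bf X}\in A$ are all handled cleanly.
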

%
%
\begin{proof} (of Corollary \ref{cor-cross-deriv} from Theorem \ref{thm-cross-piv}).
The corollary follows immediately from Theorem \ref{thm-cross-piv} by applying Observation \ref{lem-russo}, with
$n = R^2$ and $k = R^2/2$, to the event $A_R$.  
\end{proof}

\begin{subsection}{A percolation ingredient for the proof of Theorem \ref{thm-cross-piv}}\label{sect-ingr-thm-cross}
First we mention the following
quite obvious inequality comparing $P_{\frac{n}{2},n}$ and $P_{1/2}$:

\begin{obs}\label{triv-obs}
For all even $n \geq 2$ and all increasing events $A \subset \{0,1\}^n$, 
$$ P_{\frac{n}{2}, n}(A) \leq 2 P_{\frac{1}{2}}(A).$$
\end{obs}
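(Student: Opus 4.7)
The plan is to condition $P_{1/2}$ on the total number of ones. Under the product measure $P_{1/2}$ on $\{0,1\}^n$, the quantity $|\omega| = \sum_e \omega_e$ has the Binomial$(n,1/2)$ distribution, and conditionally on $|\omega| = k$ each configuration of weight $k$ is equally likely. Hence
\begin{equation*}
P_{1/2}(A) = \sum_{k=0}^n \binom{n}{k} 2^{-n}\, P_{k,n}(A).
\end{equation*}

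The key monotonicity fact I would invoke is that for every increasing event $A$, the map $k \mapsto P_{k,n}(A)$ is non-decreasing in $k$. This is immediate from the natural coupling of $P_{k,n}$ and $P_{k+1,n}$ in which one starts from a sample drawn according to $P_{k,n}$ and flips a uniformly chosen coordinate with value $0$ to $1$ (the coupling used implicitly in Observation \ref{lem-russo}): the resulting configuration has distribution $P_{k+1,n}$ and pointwise dominates the original.

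Using this monotonicity, I would lower-bound the displayed sum by keeping only the terms with $k \geq n/2$:
\begin{equation*}
P_{1/2}(A) \;\geq\; \sum_{k=n/2}^n \binom{n}{k} 2^{-n}\, P_{k,n}(A) \;\geq\; P_{\frac{n}{2},n}(A) \sum_{k=n/2}^n \binom{n}{k} 2^{-n}.
\end{equation*}
Finally, for even $n$, symmetry of the binomial coefficients yields $\sum_{k=n/2}^n \binom{n}{k} = 2^{n-1} + \tfrac{1}{2}\binom{n}{n/2} \geq 2^{n-1}$, so the right-most factor is at least $1/2$. Rearranging gives $P_{\frac{n}{2},n}(A) \leq 2 P_{1/2}(A)$, which is the claim.

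There is no real obstacle here; the only point worth stating explicitly is the monotonicity in $k$ of $P_{k,n}(A)$ for increasing $A$, which is a short coupling argument. Everything else is a direct conditioning computation and the symmetry identity for the central binomial tail.
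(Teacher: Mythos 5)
Your proof is correct and takes essentially the same route as the paper: the paper's one-line argument also conditions $P_{1/2}$ on the event $\{|\omega|\geq n/2\}$, uses the monotonicity of $k\mapsto P_{k,n}(A)$ for increasing $A$ to get $P_{1/2}(A\mid |\omega|\geq n/2)\geq P_{n/2,n}(A)$, and combines with $P_{1/2}(|\omega|\geq n/2)\geq 1/2$. You have merely written out the binomial decomposition and the coupling behind the monotonicity, which the paper leaves implicit.
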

\noindent (This Observation follows immediately from the simple facts that $P_{1/2}(| \omega | \geq n/2)$ and 
$P_{1/2} (A \,\, \vert \,\, | \omega | \geq n/2)) \geq P_{\frac{n}{2}, n}(A)$). \\

As said in Remark \ref{rem-th4.1}, the proof of Theorem \ref{thm-cross-piv} uses only a minimal amount of knowledge from Bernoulli percolation, namely,
FKG-Harris (the result that, for product measures,
increasing events are positively correlated) and RSW.
The form of RSW we use is that there is a $c_1 > 0$ such that for all $m \geq 1$,
\begin{equation}\label{rsw}
P_{\frac{1}{2}} (\exists \text{ an occupied vertical crossing of a given } 3 m \times m \text{ box }) < c_1.
\end{equation}
As is well-known, this (combined with FKG) easily implies that there is a $c_2 < 1$ such that,
for all $m\geq 1$, the probability (under $P_{\frac{1}{2}})$ that there is an
occupied path crossing the annulus between two concentric boxes, one of size $m \times m$ and the
other of size $3 m \times 3 m$, is smaller than $c_2$.
Since a path from $0$ to a point at distance $M$ from $0$ has to cross of order $\log M$ specific annuli of
the above shape, it follows immediately (and is
a classical result) that there are $c_3$, $c_4 > 0$ such that, for all $M \geq 1$,
\begin{equation}\label{1arm-bnd}
\pi(M) \leq c_3 \, M^{- c_4}, 
\end{equation}
$$\text{where } \pi(M) = P_{\frac{1}{2}}(\exists \text{ a path from } 0 \text{ to a point at distance } M \text{ from } 0).$$

\begin{rem} Around 2000 the result \eqref{1arm-bnd} has been dramatically improved (\cite{LSW02}), but \eqref{1arm-bnd}, together with
Theorem \ref{Maint}, is sufficient to prove Theorem \ref{thm-cross-piv}. 
\end{rem}
\end{subsection}

\begin{subsection}{Proof of Theorem \ref{thm-cross-piv} }\label{sec-proof-thm-5.1} 
We will use our main result (Theorem \ref{Maint})
and the existence of a suitable algorithm (decision tree) for checking the existence of an occupied horizontal crossing of the $R \times R$
box.
As in many applications of OSSS in the literature, we will average over a number of decision trees 
to obtain a desired `average' smallness of revealments.

For discovering a crossing from the left to the right side of a box, there is a well-known algorithm
involving a so-called {\it exploration path}. The construction of such a path
is best explained in terms of the graph obtained by representing each vertex of the triangular lattice as (the midpoint of)
a small hexagon in the hexagonal lattice (the dual of the triangular lattice). We colour a hexagon white
if its corresponding vertex in the triangular lattice is occupied, and black otherwise. The most common exploration path is a 
path (in the box) which starts
in a corner of the box, say the lower-right corner, and continues step-by-step in such a way that at each step
there is white on the right and black on the left. 
There is an occupied horizontal
crossing of the box if and only if the exploration path reaches the left side of the box before the top side.
So, to discover whether the event $A_R$ holds, only the colours of those vertices that are reached by the exploration path before it hits
the left or top side of the box, have to be revealed.

This algorithm in itself is not `suitable' yet, beacuse vertices close to the mentioned corner have a `high' probability to
be examined.
Schramm and Steif \cite{SS10} (see Section 4 of their paper) presented, in the context of Bernoulli percolation,
an adaptation of the algorithm, which is informally as follows. For each point $v_0$ (denoted by $p_0$ in their paper)
on the right side of the box,
consider a straightforward modification $\beta = \beta_{v_0}$ of the above exploration path to discover if there
is a horizontal occupied path from the left side of the box to the part of the right side above $v_0$.
And, similarly, a modification $\beta'_{v_0}$ to determine the existence of a horizontal
occupied path from the left side of the box to the part of the right side below $v_0$.
Together, these two exploration paths determine whether the event $A_R$ occurs. So, for each $v_0$ one now has a decision tree
for the event $A_R$.
It was shown in \cite{SS10} that,
for each vertex $w$ in the box, the average over all the above mentioned $v_0$'s of the corresponding revealment probabilities for $w$, is `small'.

Recall that the above mentioned work by Schramm and Steif concerns Bernoulli percolation. For the percolation model with fixed number of occupied vertices
we will consider the same decision trees (with exactly the same definition of $\beta_{v_0}$ and $\beta'_{v_0}$).
The revealment probabilities may of course differ from those in the Bernoulli case.
Below is an outline of the computation leading to an upper bound on these revealment probabilities,
where we focus on the differences and adaptations compared to that for Bernoulli percolation in \cite{SS10}.
See Section 4 of \cite{SS10} for more precise (deterministic) properties (and pictures) of the exploration paths.

Let $T_{v_0}$ denote the decision tree corresponding to the exploration paths $\beta_{v_0}$ and
$\beta'_{v_0}$. For brevity we will write $\P$ for $P_{\frac{R^2}{2}, R^2}$. 
We have, for each hexagon $H$,
\begin{equation}\label{eq:H-exam-ineq}
\P(H \text{ examined under } T_{v_0})  \leq \P(H \text{ on } \beta_{v_0}) + \P(H \text{ on } \beta'_{v_0}),
\end{equation}
where ``$H$ on $\beta$" means that at least one of the sides of $H$ is on $\beta$.

It is easy to see that if $H$ touches $\beta_{v_0}$ then there is a black or a white path from (a neighbour of) $H$ 
to distance at least $|z_0(H) - v_0| - K$ from $H$, where $z_0(H)$ is the closest point to $H$ on the right side
of the box, and $K$ is a (universal) constant. 
(In \cite{SS10} a considerably stronger statement was obtained, but for our purpose the one above is sufficient).
Hence,
\begin{eqnarray}\label{eq-conseq}
\nonumber
& \,& \P(H \text{ on } \beta_{v_0}) \\ \nonumber 
& \,&\leq \P(\exists \text{ a white or a black path from } H \text{ to distance }
|z_0(H) - v_0| - K \text{ from } H) \\ 
& \,& = 2 \P(\exists \text{ a white path from } H \text{ to distance }
|z_0(H) - v_0| - K \text{ from } H).
\end{eqnarray}
The same inequality also holds for $\beta'_{v_0}$, so together with \eqref{eq:H-exam-ineq} this gives
\begin{eqnarray} \nonumber
&\, & \P(H \text{ examined under } T_{v_0}) \\ \nonumber
&\, & \leq 4 \P(\exists \text{ white path from }
H \text{ to distance } |z_0(H) - v_0| - K \text{ from } H)).
\end{eqnarray}
Summing this over all $v_0$ on the right side of the box (there are roughly $R$ of these) and using
\eqref{1arm-bnd} and Observation \ref{triv-obs} gives, for every $H$,
\begin{eqnarray} \label{eq-box-reveal}
&\, & \sum_{v_0} \P(H \text{ examined under } T_{v_0}) \\ \nonumber 
&\, & \leq c_5 \sum_{m = 1}^R \P(\exists \text{ white path from }
H \text{ to distance } m \text{ from } H) \\ \nonumber
&\, & \leq 2 c_5 \sum_{m=1}^R c_3 m^{- c_4} \leq c_6 R^{1-c_4}, 
\end{eqnarray}
where $c_5$ and $c_6$ are constants $> 0$.
Hence, for some constant $c_7 > 0$,
\begin{equation}\label{eq-box-rev3}
\left[\text{The average over all } v_0 \text{ of } \P(H \text{ examined under } T_{v_0})\right] \leq c_7  R^{-c_4 }.
\end{equation}

Now for each $v_0$ we apply Theorem \ref{Maint} (with $E$ the set of vertices in the $R \times R$ box,
$A$ the above mentioned crossing event $A_R$, and $T=T_{v_0}$ ). So for each $v_0$ this gives an inequality of the
form \eqref{eq-maint-a}. Note that the l.h.s. of each of these inequalities is $1/4$. `Averaging' these inequalities and using \eqref{eq-box-rev3} yields
(with $c_8$ a constant $>0$)
$$1/4 \leq c_8 R^{- c_4} \, \sum_w \P(w \text{ is } 0-\text{pivotal for } A_R),$$
(where the sum is over all vertices $w$ in the $R \times R$ box)
and hence that the l.h.s. of \eqref{eq-thm4.1} is $\geq \frac{1}{4 c_8} R^{c_4}$.
This completes the proof of Theorem \ref{thm-cross-piv}. \qed \\

\end{subsection}

\begin{subsection}{Further comments on Theorem \ref{thm-cross-piv} and its proof}\label{sec:final}
\begin{subsubsection}{Sketch of an alternative proof of Theorem \ref{thm-cross-piv}}\label{sec:alt-proof}
As said, the proof of Theorem \ref{thm-cross-piv} in the previous subsection uses, besides our
OSSS-like result (Theorem \ref{Maint}), only a minimal amount
of (Bernoulli) percolation theory. We don't know a proof of Theorem \ref{thm-cross-piv} which neither uses the OSSS-like
result, nor more than only mild results from Bernoulli percolation.

There {\it is} a way to prove Theorem \ref{thm-cross-piv} (and even
a stronger version) without using OSSS,
but instead using relatively heavy results from Bernoulli percolation. We are not aware of such proof in the literature, and will give
an informal sketch of one (there are probably more ways): 

First, recall that one step (namely, the second to last inequality in \eqref{eq-box-reveal}) in the proof of Theorem \ref{thm-cross-piv}
uses an obvious comparison (Observation \ref{triv-obs}) between
$P_{\frac{1}{2}}$ and $\P$ ($=P_{\frac{R^2}{2}, R^2}$).
That comparison was used to bound the probability (under $\P$) 
that there is an occupied path from a given vertex to a region at a certain
distance from that vertex.
Also recall (see Remark \ref{rem-pivot-cr}) that the event that a given vertex $v$ is pivotal for $A_R$ is the 
event that there are four distinct `arms' with certain properties from 
neighbours of $v$ to the boundary of the $R \times R$ box.
It is natural to ask if the probability of that event
under the probability measure $\P$ can also be suitably compared with that under the measure $P_{\frac{1}{2}}$
(which would then yield another way to obtain Theorem \ref{thm-cross-piv}).

Such comparison can indeed be made, but it involves, roughly speaking, a change of the parameter (which was $1/2$) of the
Bernoulli process. Such change is needed to `control' the effect of the typical fluctuation of the number of occupied vertices
in the $R \times R$ box in the Bernoulli model. This fluctuation is of course of order of the square root of the number of vertices
in the box, i.e. of order $R$. This can be controlled (or `compensated') by a change of the Bernoulli parameter by roughly
$\frac{1}{R^2}$ $\times$ the mentioned typical fluctuation,
i.e.,  roughly $R/R^2 = 1/R$. The characteristic length scale of Bernoulli percolation at parameter $ 1/2 + \delta$, denoted by
$L(1/2 + \delta)$, is known to be of order at least $1/\delta$. (Much more is known but not needed here).

A central, technically complicated result in the celebrated paper \cite{K87} by Kesten (see also \cite{N08}) says that
at length scales below that characteristic length, the Bernoulli
model with the new parameter ($ 1/2 + \delta$) behaves typically like the critical model (i.e. the Bernoulli model with parameter $1/2$). 
A slightly more subtle version of Kesten's result (which is actually needed in the argument above, with an extra state of the vertices)
is given in the paper \cite{DSV09} by Damron et al, in particular Lemma 6.3). 

Since in our case $\delta \asymp 1/R$, the characteristic length is at least of order $R$ (the length of the box). Hence, the mentioned
four-arm event has, under the measure $\P$ roughly the same probability as under $P_{\frac{1}{2}}$. 
Summing that probability over all vertices in the `bulk' of the box would then also yield Theorem \ref{thm-cross-piv}.

\end{subsubsection}

\begin{subsubsection}{Some remarks on \cite{SS10}} \label{sect-rem-ScSt}
Recall again that Schramm and Steif \cite{SS10} worked on Bernoulli percolation (not on the percolation model with fixed number
of occupied vertices).  We also remark that it was not the goal of
their paper to obtain lower bounds for the expected number of pivotals but to prove a form of quantitative noise sensitivity (and use that to prove the
existence of exceptional times in a dynamical percolation model).
They used, apart from the above mentioned exploration paths, not OSSS but a different
inequality (Theorem 1.8 in their paper), which involves discrete Fourier analysis. In fact, a special case in their
Theorem 1.8 produces an {\it upper} bound for the expected number of pivotals.

\end{subsubsection}

\begin{subsubsection}{Comparison with KKL in this box-crossing situation}
We also briefly return to the $k$-out-of-$n$ version of KKL from \cite{OWi13}, mentioned in Section \ref{rel-work}.
Since each vertex $v$ in the $R \times R$ box has distance at least $R/2 - 1$ to the left or to
the right side of the box, the probability that $v$ is pivotal is at most 
$\P(\exists \text{ occupied path from a neighbour of } v \text{ to distance } \frac{R}{2} - 1 \text{ from } v)$, 
which by Observation \ref{triv-obs} and \eqref{1arm-bnd} is at most $2 c_3 (R/2 - 1)^{-c_4}$. Since this holds for each $v$ in the box,
application of the above mentioned version of KKL then gives that the expected number of pivotal vertices is at least of
order $\log R$, which is much weaker than \eqref{eq-thm4.1}.
\end{subsubsection}

\end{subsection}

\end{section}

\bigskip\noindent
{\bf \large Acknowledgments}

\smallskip
The first author (JvdB) thanks Jeff Kahn and Gabor Pete for some valuable comments and questions on the first version of this paper. In particular, JK brought the
paper \cite{OWi13}
by O'Donnell and  Wimmer to JvdB's attention, and GP asked stimulating questions about the logarithmic factor in the main result in that earlier version.

Further, JvdB thanks Pierre Nolin for helpful information concerning near-critical Bernoulli percolation. 

\newpage

\end{document}